\newcolumntype{N}[1]{>{\centering\arraybackslash}m{#1}}
\newcommand{\tpitchfork}{%
  \vbox{
    \baselineskip\z@skip
    \lineskip-.52ex
    \lineskiplimit\maxdimen
    \m@th
    \ialign{##\crcr\hidewidth\smash{$-$}\hidewidth\crcr$\pitchfork$\crcr}
  }%
}
\definecolor{refkey}{rgb}{1,0,0}
\definecolor{labelkey}{rgb}{1,0,0}
  \mathchardef\ordinarycolon\mathcode`\:
\theoremstyle{plain}
\newtheorem{thm}{Theorem}[section]
\newtheorem{prop}[thm]{Proposition}
\newtheorem{cor}[thm]{Corollary}
\begin{document}

\title{\LARGE{Coexistence of infinitely many large, stable,\\
rapidly oscillating periodic solutions\\
in time-delayed Duffing oscillators}}

\author{
 \\
Bernold Fiedler*, Alejandro López Nieto*, Richard H.~Rand**,\\
Si Mohamed Sah***, Isabelle Schneider*, Babette de Wolff* \\
{~}\\
\vspace{2cm}}

\date{version of \today}
\maketitle
\thispagestyle{empty}

\vfill

*\\
Institut für Mathematik\\
Freie Universität Berlin\\
Arnimallee 3\\ 
14195 Berlin, Germany\\
\\
**\\
Cornell University\\
Ithaca,\\
 NY 14853, USA\\
***\\
Technical University of Denmark\\
Nils Koppels Allé 404\\
2800 Kgs. Lyngby, Denmark\\
% weitere Adressen hier eintragen

%%%%%%%%%%%%%%%%%%%%%%%%%%%%%%%%%%%%%%%%%%%%%%%%%%%%%%%%%%%

\newpage
\pagestyle{plain}
\pagenumbering{roman}
\setcounter{page}{1}

\begin{abstract}
We explore stability and instability of rapidly oscillating solutions $x(t)$ for the hard spring delayed Duffing oscillator 
$$x''(t)+ ax(t)+bx(t-T)+x^3(t)=0.$$
Fix $T>0$.
We target periodic solutions $x_n(t)$ of small minimal periods $p_n=2T/n$, for integer $n\rightarrow \infty$, and with correspondingly large amplitudes.
Note how $x_n(t)$ are also marginally stable solutions, respectively, of the two standard, non-delayed, Hamiltonian Duffing oscillators
$$x''+ a x+(-1)^nbx+x^3=0.$$
Stability changes for the delayed Duffing oscillator. 
Simultaneously for all sufficiently large $n\geq n_0$, we obtain local exponential stability for $(-1)^nb<0$, and exponential instability for $(-1)^nb>0$, provided that $$0 \neq (-1)^{n+1}b\,T^2< \tfrac{3}{2}\pi^2.$$
We interpret our results in terms of noninvasive delayed feedback stabilization and destabilization for large amplitude rapidly periodic solutions of the standard Duffing oscillators. 
We conclude with numerical illustrations of our results for small and moderate $n$ which also indicate a Neimark-Sacker torus bifurcation at the validity boundary of our theoretical results.

\end{abstract}

%\newpage
\vspace{2cm}
\tableofcontents

%%%%%%%%%%%%%%%%%%%%%%%%%%%%%%%%%%%%%%%%%%%%%%%%%%%%%%%%%%%

\newpage
\pagenumbering{arabic}
\setcounter{page}{1}

\section{Introduction and main result}\label{sec1}
The \emph{Duffing oscillator} \cite{duff1918, scholarduff} is given by the special case $b=0$ of the second order pendulum equation
\begin{equation}
x''+ ax + bx(t-T)+x^3=0 \ .
\label{eq:1.1}
\end{equation}
Here we suppress time $t$ as an argument of $x=x(t)$, in absence of a delay $T>0$.
For a theoretical mechanics perspective see for example \cite{KoBr11}.
For $a\geq 0=b$, the resulting integrable Hamiltonian system with energy 
\begin{equation}
H=\tfrac{1}{2}(x')^2+ \tfrac{1}{2}ax^2+\tfrac{1}{4}x^4
\label{eq:1.2}
\end{equation}
 consists of a family $x=x(t)$ of nested periodic orbits with \emph{amplitude} $A>0$, i.e.
\begin{equation}
x(0)=A, \qquad x'(0)=0 \,.
\label{eq:1.3}
\end{equation}
The \emph{minimal period} $p=p(a,A)$ is strictly decreasing to zero, for $A\nearrow \infty$, with partial derivative $p_A<0$.
This is due to the hard spring restoring force $-ax-x^3$ of the Duffing oscillator.
For $a<0$, in contrast, the double-well potential $\tfrac{1}{2}ax^2+ \tfrac{1}{4}x^4$ of the Hamiltonian $H$ features a figure-8 pair of homoclinic lobes to the hyperbolic unstable equilibrium $x=x'=0$, each filled with nested periodic orbits of periods $p$ bounded below.
The exterior of that homoclinic pair, again, is filled with nested periodic orbits of positive Hamiltonian energy $H>0$, and with minimal periods $p\searrow 0$ for $A\nearrow \infty$.
All periodic orbits with $H>0$ are \emph{odd}:
\begin{equation}
x(t+p/2)=-x(t),
\label{eq:1.4}
\end{equation}
for all $t \in \mathbb{R}$ and any fixed real $a$.
All periodic orbits are marginally stable.
See also \cite{guho83}, \cite{scholarduff}, \cite{hish18} for further discussion of the classical Duffing oscillator.
\emph{It is our main objective, in the present paper, to study the stabilization and destabilization of the exterior periodic orbits via a nonzero \emph{delay term} $b$, in the limit of large amplitudes $A$ and for correspondingly small periods $p$.}

Pyragas control \cite{Pyr1, Pyr2, fieetal07, fieetal08} is a general device for noninvasive feedback control of periodic orbits.
In our setting, consider the standard Duffing oscillator, 
\begin{equation}
x'' + \alpha x + x^3 = u(t),
\label{eq:1.5}
\end{equation}
with a control term $u=u(t)$.
For $u\equiv 0$ we assume that the periodic orbit $x$ possesses positive energy $H>0$, so that $x$ is odd. Then \eqref{eq:1.4} implies
\begin{equation}
x(t+T)=(-1)^n x(t), \quad \textrm{for} \quad T=np/2,
\label{eq:1.6}
\end{equation}
i.e. for any integer multiple $n=1,2,3, \ldots$ of the minimal half-period $p/2$.
In particular, any delayed linear feedback control
\begin{equation}
u(t):= \kappa \cdot(x-(-1)^n x(t-T))
\label{eq:1.7}
\end{equation}
with $\kappa \in \mathbb{R}$ vanishes, i.e. becomes \emph{noninvasive}, on our target periodic orbit. 
See \cite{nakue98}, \cite{Fieetal10}, for this ``half-period" variant of Pyragas control.
See \cite{Schn13, SchnBo16} and the references there, for many more sophisticated symmetry-related refinements.
The main point is that the linear stability of the target periodic orbit $x$ of the resulting delayed Duffing oscillator
\begin{equation}
x''+(\alpha - \kappa)x + (-1)^n \kappa x (t-T)+x^3= 0
\label{eq:1.8}
\end{equation}
may well depend on the choice of the control amplitude $\kappa \in \mathbb{R}$ and of the delay $T=np/2$. And it does!
Note that \eqref{eq:1.8} takes the form \eqref{eq:1.1} with 
\begin{equation}
a= \alpha - \kappa, \quad b=(-1)^n \kappa\ .
\label{eq:1.9}
\end{equation}
Our present paper in fact establishes the simultaneous stability of an unbounded sequence of rapidly oscillating periodic solutions, alternating with an unbounded sequence of rapidly oscillating periodic solutions which are unstable.
See theorem \ref{thm:1.1} below. Here \emph{rapidly oscillating periodic solutions} are defined by minimal periods $0<p<2T$.

Our results for the second order equations \eqref{eq:1.1}, \eqref{eq:1.8} stand in marked contrast with many results in the literature on scalar delay differential equations (DDEs)
\begin{equation}
\label{eq:1del}
x'=f(x, x(t-T))
\end{equation}
for $f(0,0)=0$ and nonlinearities $f$ which are strictly monotone in the delayed variable $x(t-T)$; see \cite{Nuss74, wal83, MPNu13, wal14, mp88}.
Remarkably, all \emph{rapidly oscillating periodic solutions are linearly unstable} in this \emph{monotone feedback} setting. 
Only \emph{slowly oscillating periodic solutions}, where consecutive zeros only occur at distances $t>T$, and therefore minimal periods exceed $2T$, may be stable. 
That stability also requires negative monotone feedback, i.e.~nonlinearities $f$ which are strictly decreasing in the delayed variable $x(t-T)$.
For the construction of a nonmonotone example with $xf(x)<0$ and infinitely many stable slowly oscillating periodic solutions see \cite{vas11}. 

%Survey von Alejandro, Isabelle und Babette
In the analysis of the scalar equation \eqref{eq:1del} with monotone feedback, a crucial role is played by the \emph{zero number} $z$:
a discrete valued Lyapunov function which essentially counts the number of sign changes of solutions over time intervals of length $T$; see \cite{mp88, mpse96a}. 
This suggests that rapidly oscillating solutions of \eqref{eq:1del}, for which necessarily $z\geq2$, may well decay to a slowly oscillating periodic solution, for which $z\in\{0,1\}$.
See \cite{fiemp89} for such heteroclinic orbits. 
The opposite direction, on the other hand, is strictly forbidden by the monotone decay of the zero number.

Once the condition on monotonicity of $f$ in the delayed variable of the scalar delay equation \eqref{eq:1del} is removed, stable rapid oscillations have been observed; see \cite{IvLo99, Sto08, Sto11} and the references there. 
The constructions of such $f$ involve smooth approximations of step functions and prove the existence of at least one linearly stable rapidly oscillating periodic solution. 

Our approach preserves the monotonicity condition on $f$ but explores second order equations, instead.
For second order delay equations, the existence of several slowly and rapidly oscillating solutions has been established for the delayed van-der-Pol oscillator \cite{KiLe17}. 
Techniques involved a combination of the contraction mapping theorem and interval arithmetic. 
Stability of such solutions was not proved, but was definitely supported by numerical evidence.

For the delayed Duffing oscillator \eqref{eq:1.1} stability of large amplitude rapidly oscillating periodic solutions has been observed numerically, and supported by formal asymptotic expansions. See \cite{wacha04, HaBe12, MiChaBa15, DaShRa17}.
Similar methods have been applied by \cite{XuChu03} towards delayed feedback control of a forced van der Pol - Duffing oscillator.
Neither those (or any) numerical simulations, nor the formal methods employed so far, however, amount to a mathematical proof for the coexistence of an infinity of large stable rapidly oscillating solutions.
We close this gap of mathematical rigor in sections \ref{sec1}-\ref{sec5} of the present paper.
For further discussion of numerical aspects which illustrate and support -- but do not prove -- our results, see section \ref{sec6}.

Asymptotic stability and instability of periodic orbits $\mathbf{x}(t)$ of general retarded functional differential equations
\begin{equation}
\mathbf{\dot{x}}(t)= \mathbf{f}(\mathbf{x}_t)
\label{eq:1.10}
\end{equation}
is governed by \emph{Floquet theory}.
We recall \cite{hale77, halevl93, dieetal95}.
For $\mathcal{X}:= C^0([-T,0], \mathbb{R}^N)$ as a phase space for $\mathbf{x}_t \in \mathcal{X}, \ \mathbf{x}_t(\vartheta):= \mathbf{x}(t + \vartheta),\  -T\leq \vartheta \leq 0 $,
we obtain a nonlinear (local) solution semiflow $\mathbf{x}_t=S(t, \mathbf{x}_0)$.
Periodic orbits $\mathbf{x}(t+p)= \mathbf{x}(t)$ with minimal period $p>0$ define fixed points $\mathbf{x}_t = S(p, \mathbf{x}_t)$.
Let
\begin{equation}
\Pi= \partial_{\mathbf{x}} S(p, \mathbf{x}_0)
\label{eq:1.11}
\end{equation}
denote the linearized time-$p$ map, along our periodic orbit.
Note that $\Pi$ itself is the time-$p$ map of the linearized evolution
\begin{equation}
\mathbf{\dot{y}}(t)= f'(\mathbf{x}_t)\mathbf{y}_t
\label{eq:1.12}
\end{equation}
along the periodic orbit $\mathbf{x}_t$, where $f'(\mathbf{x}_t)$ denotes the Fréchet derivative of $f$ at $\mathbf{x}_t$.
In particular, the Arzela-Ascoli theorem implies that 
\begin{equation}
\begin{aligned}
\Pi^m:\quad &\mathcal{X}\rightarrow \mathcal{X}&\\
&\mathbf{y}_0 \mapsto \mathbf{y}_{mp}& \\
\end{aligned}
\label{eq:1.13}
\end{equation}
is compact, for $mp\geqslant T$.
Therefore, the spectrum of $\Pi^m$, and likewise of $\Pi$ itself, consists of isolated nonzero eigenvalues of finite algebraic multiplicity possibly accumulating at the spectral value $0$.
The nonzero eigenvalues $\mu$ of $\Pi$ are called \emph{Floquet multipliers} of $\mathbf{x}_t$.
Note that $\mu =1$ is a (trivial) Floquet multiplier, with eigenvector $\mathbf{y}_0(\vartheta):= \mathbf{\dot{x}}_0(\vartheta)$, for $-T\leq\vartheta\leq0$.
Moreover, $\mathbf{x}_t$ is \emph{locally asymptotically stable} if
\begin{equation}
|\mu|<1
\label{eq:1.14}
\end{equation}
holds for all Floquet multipliers, except for the trivial Floquet multiplier $\mu = 1$ which is required to be algebraically simple.
If $\mathbf{x}_t$ possesses any Floquet multiplier
\begin{equation}
|\mu|>1
\label{eq:1.15}
\end{equation}
outside the complex unit circle, then $\mathbf{x}_t$ is \emph{unstable}.
For brevity, we use \eqref{eq:1.14} and \eqref{eq:1.15} as definitions of \emph{linear stability} and \emph{linear instability}, respectively.

To formulate our main result, fix any delay $T>0$.
Let $x_n(t)$ denote the unique periodic orbit of the non-delayed Duffing oscillator
\begin{equation}
x''+(a+(-1)^nb)x+ x^3=0
\label{eq:1.16a}
\end{equation}
with strictly positive Hamiltonian
\begin{equation}
H= \tfrac{1}{2}(x')^2+ \tfrac{1}{2}(a+(-1)^nb)x^2+ \tfrac{1}{4}x^4
\label{eq:1.16b}
\end{equation}
and with minimal period
\begin{equation}
p=2T/n \ .
\label{eq:1.17}
\end{equation}
Here we have to assume that $n \in \mathbb{N}$ satisfies
\begin{equation}
n>\tfrac{1}{\pi}T \cdot (a+(-1)^nb)^{1/2},
\label{eq:1.18}
\end{equation}
in case $a+(-1)^nb>0$.
Note that \eqref{eq:1.6}, \eqref{eq:1.17} imply that the same periodic solutions $x_n(t)$ also satisfy the delayed Duffing equation \eqref{eq:1.1}.

\begin{thm} \label{thm:1.1}
In the setting \eqref{eq:1.16a}--\eqref{eq:1.18} assume $T>0$ and $b \neq 0$ satisfy
\begin{equation}
(-1)^{n+1}b\,T^2<\tfrac{3}{2}\pi^2.	
\label{eq:1.19}
\end{equation}
Moreover assume that $n\geqslant n_0$ is chosen large enough.

Then the periodic solution $x_n(t)$ of the delayed Duffing equation \eqref{eq:1.1} is
\begin{eqnarray}
\textrm{linearly stable } & \Longleftrightarrow &(-1)^nb<0, \label{eq:1.20a} \\
\textrm{linearly unstable } & \Longleftrightarrow &(-1)^nb>0,
\label{eq:1.20b}
\end{eqnarray}
in the sense of \eqref{eq:1.14}, \eqref{eq:1.15}. 
\end{thm}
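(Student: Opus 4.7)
The plan is a Floquet analysis of the linearization along $x_n$, reduced exactly to an ordinary Hill-type equation through the noninvasive structure, and then analyzed asymptotically in the large-amplitude limit $n\to\infty$.

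First, I linearize \eqref{eq:1.1} at $x_n$ to obtain $y'' + ay + by(t-T) + 3x_n^2(t)y = 0$. Since the coefficient $3x_n^2$ has period $p_n/2$ and $T = np_n/2$ is an integer multiple of this period, I apply the Floquet ansatz $y(t) = e^{\lambda t}v(t)$ with $v$ of period $p_n/2$; then $v(t-T) = v(t)$, the delay term collapses to $be^{-\lambda T}y$, and the DDE becomes the Hill-type ODE $y'' + (a + be^{-\lambda T} + 3x_n^2(t))y = 0$. Writing $\eta := e^{\lambda p_n/2}$ gives $\mu = \eta^2$ for the DDE Floquet multiplier and $e^{-\lambda T} = \eta^{-n}$; self-consistency requires that $\eta$ be a half-period Floquet multiplier of this Hill equation at parameter $c = a + b\eta^{-n}$.

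Second, at the reference parameter $c_0 := a + (-1)^n b$ the Hill equation coincides with the variational equation of the non-delayed oscillator \eqref{eq:1.16a} along $x_n$: the solution $\dot x_n$ satisfies $\dot x_n(t+p_n/2) = -\dot x_n(t)$, so $\rho = -1$ is a half-period multiplier, and Hamiltonian conservation forces the other multiplier to $-1$ as well. The hard-spring non-degeneracy $p_A<0$ produces a genuine Jordan block. I rescale $s = A_n t$, $x_n(t) = A_n X_n(s)$, with $A_n \sim nP_0/(2T)$ and limit $X_\infty''+X_\infty^3=0$, $X_\infty(0) = 1$, $X_\infty'(0) = 0$ of minimal period $P_0$. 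A variation-of-parameters computation, using the Jordan-basis solutions $X_\infty'$ (time translation) and $X_\infty + sX_\infty'$ (scaling symmetry), yields
\begin{equation*}
\mathrm{tr}\,M(c) = -2 + \frac{\gamma}{A_n^2}(c - c_0) + O(n^{-4}), \qquad \gamma := \frac{P_0}{2}\int_0^{P_0/2}(X_\infty'(s))^2\,ds > 0.
\end{equation*}

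Third, substituting $\eta = -1 + \nu/n$ gives $\eta + \eta^{-1} = -2 - \nu^2/n^2 + O(n^{-3})$ and $\eta^{-n} \to (-1)^n e^\nu$. Since $c - c_0 = b(\eta^{-n} - (-1)^n)$ the $a$-dependence cancels, and the self-consistency equation reduces, in the limit $n\to\infty$, to the transcendental characteristic equation
\begin{equation*}
\nu^2 + \tilde\beta(e^\nu - 1) = 0, \qquad \tilde\beta := \frac{4\gamma T^2}{P_0^2}(-1)^n b.
\end{equation*}
The root $\nu = 0$ reproduces the trivial multiplier $\mu = 1$ of algebraic multiplicity one. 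A local expansion gives the other nearby root $\nu \approx -\tilde\beta$; since $|\eta|^2 \approx 1 - 2\,\mathrm{Re}(\nu)/n$, stability $|\mu|<1$ is equivalent to $\mathrm{Re}\,\nu > 0$, hence to $\tilde\beta < 0$, hence to $(-1)^n b < 0$, giving \eqref{eq:1.20a}--\eqref{eq:1.20b}. Purely imaginary roots $\nu = i\omega$ occur only at $\omega = k\pi$, $\tilde\beta = -k^2\pi^2/2$; the first Hopf crossing is at $\tilde\beta = -\pi^2/2$, and the Beta-function identity $\int_0^1(1-x^4)^{-1/2}dx \,/\, \int_0^1(1-x^4)^{1/2}dx = 3/2$ evaluates the prefactor $4\gamma T^2/P_0^2$ so that this threshold becomes $(-1)^n b = -3\pi^2/(2T^2)$, exactly the bound \eqref{eq:1.19}.

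The main analytic obstacles are (i) upgrading the formal limit $n\to\infty$ to a rigorous correspondence between roots $\nu$ of the transcendental equation and genuine Floquet multipliers of the DDE, uniformly on bounded subsets of $\mathbb{C}$ (a careful implicit-function argument with small parameter $1/n$); and (ii) controlling all \emph{other} Floquet multipliers, which a priori form a countable set accumulating at $0$. Compactness of $\Pi^m$ from \eqref{eq:1.13} confines the multipliers to a bounded annulus, but quantitative Hill-monodromy estimates for $\eta$ bounded away from $\pm 1$ are needed to exclude spurious unit-circle crossings. The hypothesis \eqref{eq:1.19} is precisely the condition that prevents the first of the Hopf crossings enumerated above, providing the uniform spectral gap required for the stability dichotomy.
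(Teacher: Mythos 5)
Your route is essentially the paper's own: treating the delayed term as a parameter via $e^{-\lambda T}=\eta^{-n}$ (the paper's $\sigma=(-\mu)^{-n}$), computing the half-period monodromy trace from the Jordan block generated by the translation and scaling symmetries of the limiting cubic oscillator, and arriving at the same limiting transcendental equation $\nu^2+\tfrac{1}{3}(-1)^n b\,T^2(e^\nu-1)=0$ with the same root $\nu\approx-\tilde\beta$, the same stability dichotomy, and the same imaginary-axis crossings at $(-1)^{n+1}bT^2=\tfrac32 k^2\pi^2$, $k$ odd; all your numerical coefficients (the factor $\gamma=P_0^2/12$, equivalently $\int_0^{P_0/2}(X_\infty')^2=P_0/6$, and the threshold $\tfrac32\pi^2$) agree with the paper's. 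The two ``analytic obstacles'' you defer are, however, exactly the content of the paper's theorem \ref{thm:5.1} and proposition \ref{Prop 5.2}: the a priori boundedness of $\nu$ is extracted from the uniformly quadratic form of the characteristic equation (not from separate Hill-monodromy estimates), the rigorous root correspondence comes from an implicit function theorem in $(\sqrt{\varepsilon},T)$ valid only for small $T$, and the extension to all $T$ with $(-1)^{n+1}bT^2<\tfrac32\pi^2$ is a homotopy/root-counting argument in $T$ at $\varepsilon=0$, supplemented by an indirect compactness argument for the stable case. As written, your proposal correctly identifies but does not close these steps, which carry most of the rigor of the proof.
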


Of course, condition \eqref{eq:1.19} of the theorem is trivially satisfied in case \eqref{eq:1.20b}.

The minimal periods $p=2T/n$ of our rapid periodic solutions $x_n(t)$ imply that the same $x_n(t)$ also solve the delayed Duffing equation \eqref{eq:1.1}, with the delay $T$ replaced by a new delay 
\begin{equation}
\label{eq:1.28}
\tilde{T} = T+k\,p = \nu \, T > 0\,,
\end{equation}
for any integer $k\geq -n/2$.
Here we have abbreviated $\nu := 1+2k/n$.
For fixed prescribed $T>0$, this replication in particular produces rapidly oscillating large amplitude solutions of \eqref{eq:1.1}, for a dense set of delays $\tilde{T}$ which are rational multiples of $T$. Consider any sequence $k=k_n$ and define
\begin{equation}
\label{eq:1.28a}
\underline{\nu} = \liminf \,(1+2 k_n/n)\,.
\end{equation}
For fixed odd $\nu$, for example, we may choose $k_n= n (\nu-1)/2$, to obtain constant $\tilde{T}=\nu T$, and $\underline{\nu}=\nu$.

\begin{cor} \label{cor:1.1}
In the setting \eqref{eq:1.16a}--\eqref{eq:1.18}, \eqref{eq:1.28}, \eqref{eq:1.28a}, assume $b \neq 0 < T$ and $\underline{\nu}>0$ satisfy
\begin{equation}
(-1)^{n+1}b\,\underline{\nu}^2\,T^2<\tfrac{3}{2}\pi^2.	
\label{eq:1.19cor}
\end{equation}
Moreover assume that $n\geqslant n_0$ is chosen large enough.

Then the periodic solution $x_n(t)$ of the delayed Duffing equation \eqref{eq:1.1} with delays $\tilde{T}$, instead of $T$, is
\begin{eqnarray}
\textrm{linearly stable } & \Longleftrightarrow &(-1)^nb<0, \label{eq:1.20acor} \\
\textrm{linearly unstable } & \Longleftrightarrow &(-1)^nb>0,
\label{eq:1.20bcor}
\end{eqnarray}
in the sense of \eqref{eq:1.14}, \eqref{eq:1.15}.
\end{cor}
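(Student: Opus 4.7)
The plan is to reduce Corollary \ref{cor:1.1} directly to Theorem \ref{thm:1.1} by substituting $(T,n)$ with $(\tilde{T},\tilde{n})$, where $\tilde{n}:=\nu n = n+2k$. First I would verify that $x_n$ remains a periodic solution of the delayed Duffing equation \eqref{eq:1.1} with delay $\tilde{T}$ in place of $T$: since $kp$ is an integer multiple of the minimal period $p=2T/n$, the half-period antisymmetry \eqref{eq:1.6} gives
$$x_n(t-\tilde{T})=x_n(t-T-kp)=x_n(t-T)=(-1)^n x_n(t),$$
so $b\,x_n(t-\tilde{T})=(-1)^n b\,x_n(t)$, and hence $x_n$ solves \eqref{eq:1.1} with delay $\tilde{T}$ because it already solves the non-delayed companion \eqref{eq:1.16a}.

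Next I would recast the minimal period as $p=2\tilde{T}/\tilde{n}$ with $\tilde{n}=\nu n$ a nonnegative integer, and observe $(-1)^{\tilde{n}}=(-1)^{n+2k}=(-1)^n$, so the non-delayed companion \eqref{eq:1.16a} is unchanged. The existence condition \eqref{eq:1.18} with $(\tilde{T},\tilde{n})=\nu(T,n)$ becomes $\nu n>\tfrac{1}{\pi}\,\nu T(a+(-1)^n b)^{1/2}$, which cancels to the original \eqref{eq:1.18}. Likewise the stability threshold \eqref{eq:1.19} in the substituted variables reads $(-1)^{\tilde{n}+1}b\tilde{T}^2=(-1)^{n+1}b\,\nu_n^2 T^2<\tfrac{3}{2}\pi^2$. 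Once these four items are aligned, Theorem \ref{thm:1.1} applied to $x_n$ in the substituted setting yields exactly the stability dichotomy \eqref{eq:1.20acor}--\eqref{eq:1.20bcor}.

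The only delicate point, which I expect to be the main obstacle, is bridging the per-index condition $(-1)^{n+1}b\,\nu_n^2 T^2<\tfrac{3}{2}\pi^2$ and the \emph{liminf}-based hypothesis \eqref{eq:1.19cor} of the corollary. In the unstable case $(-1)^n b>0$ both conditions are automatic, so there is nothing to do. In the stable case $(-1)^n b<0$ the strict inequality \eqref{eq:1.19cor} together with $\underline{\nu}=\liminf \nu_n$ produces an infinite subsequence of indices $n$ on which $\nu_n$ lies close enough to $\underline{\nu}$ that the sharper per-$n$ inequality persists. Along this subsequence $\tilde{T}_n=\nu_n T$ is bounded, which allows the threshold $n_0$ inherited from Theorem \ref{thm:1.1} to be chosen uniformly, while $\tilde{n}_n=\nu_n n\to\infty$ because $\underline{\nu}>0$, so the largeness hypothesis $\tilde{n}_n\geq n_0$ is met for all sufficiently large $n$ in the subsequence. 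Apart from this uniformization of $n_0$ across the subsequence, no new dynamical analysis is required: the corollary is a pure reparametrization of Theorem \ref{thm:1.1}.
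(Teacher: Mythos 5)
Your proposal is correct and follows exactly the paper's route: the paper's entire proof is the observation that $\tilde{T}=\tilde{n}\,p/2$ with $\tilde{n}=n+2k$, so that Theorem \ref{thm:1.1} applies verbatim to the pair $(\tilde{T},\tilde{n})$. You simply spell out the details the paper leaves implicit (invariance of parity, cancellation of $\nu$ in \eqref{eq:1.18}, and the passage from the per-index condition $(-1)^{n+1}b\,\nu_n^2T^2<\tfrac{3}{2}\pi^2$ to the $\liminf$ hypothesis \eqref{eq:1.19cor} along a subsequence with uniform $n_0$), which is a faithful and somewhat more careful rendering of the same argument.
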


To prove the corollary, we just note that
\begin{equation}
\label{eq:1.29}
\tilde{T} = \tilde{n}\, \tfrac{p}{2}\,, \qquad \mathrm{with} \quad \tilde{n}=n+2k\,.
\end{equation}
Therefore theorem \ref{thm:1.1} applies to $\tilde{T}, \tilde{n}$, and the corollary follows.

\begin{cor}\label{cor:1.2}
Consider the standard Duffing equation \eqref{eq:1.5} with $u\equiv 0$.
Fix $T>0$.
Let $x_n(t)$ denote the unique periodic orbit with minimal period $p=2T/n$ and positive Hamiltonian.
Let $n\geqslant n_0 > \tfrac{1}{\pi} T \sqrt{\alpha_+}$ be chosen large enough, where $\alpha_+:= \max\{0,\alpha\}$.
Assume the control amplitude $\kappa$ satisfies $-\kappa T^2<\tfrac{3}{2}\pi^2$.

Then the noninvasive delayed feedback control \eqref{eq:1.7} of \eqref{eq:1.5} makes $x_n(t)$
\begin{eqnarray}
\textrm{linearly stable } & \Longleftrightarrow & \kappa<0, \\
\label{eq:1.21}
\textrm{linearly unstable } & \Longleftrightarrow & \kappa>0.
\label{eq:1.22}
\end{eqnarray}
\end{cor}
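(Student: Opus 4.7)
The plan is to reduce the corollary to a direct application of Theorem~\ref{thm:1.1} through the parameter identification already recorded in \eqref{eq:1.9}. The controlled Duffing equation \eqref{eq:1.5} with $u(t)$ given by the noninvasive feedback \eqref{eq:1.7} is literally of the form \eqref{eq:1.1} with $a = \alpha - \kappa$ and $b = (-1)^n\kappa$, so the entire task reduces to verifying that the hypotheses and conclusions of the corollary match those of the theorem under this substitution.

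I would first check that the $x_n(t)$ defined in the corollary coincides with the $x_n(t)$ targeted by the theorem. The combination $a + (-1)^n b = (\alpha - \kappa) + (-1)^{2n}\kappa = \alpha$ shows that the non-delayed equation \eqref{eq:1.16a} collapses to the uncontrolled standard Duffing equation $x'' + \alpha x + x^3 = 0$, so the positive-energy periodic orbits with minimal period $p = 2T/n$ of the theorem and of the corollary are the same. The admissibility condition \eqref{eq:1.18} reads $n > \tfrac{1}{\pi}T\sqrt{\alpha_+}$, which is precisely the lower bound on $n_0$ imposed in the corollary; the case $\alpha \leq 0$ is vacuous since then $\sqrt{\alpha_+}=0$. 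Noninvasiveness of \eqref{eq:1.7} on $x_n$ is already guaranteed by the symmetry \eqref{eq:1.6} combined with $T = np/2$.

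Next I would translate the smallness hypothesis \eqref{eq:1.19}. Since $(-1)^{n+1}b = (-1)^{n+1}(-1)^n\kappa = -\kappa$, condition \eqref{eq:1.19} becomes exactly $-\kappa T^2 < \tfrac{3}{2}\pi^2$, the standing assumption of the corollary. Finally, from $(-1)^n b = (-1)^{2n}\kappa = \kappa$, the sign alternative $(-1)^n b < 0$ versus $(-1)^n b > 0$ in \eqref{eq:1.20a}, \eqref{eq:1.20b} translates term by term, and in both directions of the biconditional, into $\kappa < 0$ (linear stability) versus $\kappa > 0$ (linear instability). I do not anticipate any nontrivial obstacle: the full content of the corollary already sits inside Theorem~\ref{thm:1.1}, and the proof is purely algebraic bookkeeping with no additional analytic input required.
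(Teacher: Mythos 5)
Your reduction is exactly the paper's own argument: the paper proves the corollary in one line by invoking Theorem~\ref{thm:1.1} with $b=(-1)^n\kappa$ and noting that the conclusion is independent of $a=\alpha-\kappa$. Your bookkeeping ($a+(-1)^nb=\alpha$, $(-1)^{n+1}b=-\kappa$, $(-1)^nb=\kappa$) is correct and simply makes that one-line reduction explicit.
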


The corollary follows from theorem \ref{thm:1.1}, for $b=(-1)^n \kappa$ as in \eqref{eq:1.9}, because neither stabilization nor destabilization depends on the coefficient $a=\alpha-\kappa$ in \eqref{eq:1.16a}, at all.

The remaining sections are organized as follows. 
In section \ref{sec2} we scale the delayed Duffing oscillator \eqref{eq:1.1} such that the amplitudes $A_n=x_n(0)$ are normalized to $1$.
In particular, this introduces a small parameter
\begin{equation}
\varepsilon:=A_n^{-2}
\label{eq:1.23}
\end{equation}
which also regularizes the linearized delay equation.
In section \ref{sec3} we use oddness of $x_n(t)$ to introduce half-period Floquet multipliers $\mu$; the Floquet multipliers for the full period $p$ described in \eqref{eq:1.11}--\eqref{eq:1.13} above become $\mu^2$ in this notation.
We then derive a characteristic equation \eqref{eq:3.13} for $\mu$, of unbounded polynomial order, which involves 
\begin{equation}
\sigma := (-\mu)^{-n}.
\label{eq:1.24}
\end{equation}
The term $\sigma$ originates from  the delay $T=np/2$ which amounts to $n$ half-periods $p/2$; see \eqref{eq:1.17}.
We treat $\sigma$ as a free complex parameter in section \ref{sec4}.
Note how $|\sigma| < 1 < |\mu| $ indicates linear instability.
This motivates $\sigma$-uniform expansions of the $2 \times 2$ Wronskian, and in particular its trace, which enters the characteristic equation.
Evaluation of the expansions, in section \ref{sec5}, provides $\varepsilon$-expansions for the critical nontrivial half-period Floquet multiplier $\mu$ near $\mu=-1$, and proves theorem \ref{thm:1.1}.
The explicit expansions for $\mu$ itself, the standard Floquet-multiplier $\mu^2$, and the Floquet multiplier $\mu^n= 1/ \sigma$ at time $T=np/2$, are summarized in theorem \ref{thm:5.1}.
We conclude with some numerical illustrations, and a comparison with earlier results, in section \ref{sec6}.

For a nontechnical summary of our results, we refer to \cite{Sahetal19}.
Our result claims to accurately analyze and predict the in-/stability of unbounded infinities of rapidly oscillating periodic solutions in the delayed Duffing equation. 
Due to the delicate technical nature of our claims, we have aimed at complete mathematical proofs, which neither rely on \emph{ad hoc} ``approximations'' nor resort to unwarranted ``simplifications''. 

\textbf{Acknowledgement.}
This work originated at the \emph{International Conference on Structural Nonlinear Dynamics and Diagnosis 2018, in memoriam Ali Nayfeh}, at Tangier, Morocco.
We are deeply indebted to Mohamed Belhaq, Abderrahim Azouani, to all organizers, and to all helpers of this outstanding conference series.
They indeed keep providing a unique platform of inspiration and highest level scientific exchange, over so many years, to the benefit of all participants.
Original typesetting was patiently accomplished by Patricia H\u{a}b\u{a}\c{s}escu. 
This work was partially supported by the Deutsche Forschungsgemeinschaft through SFB 910 project A4.
Authors RHR and SMS gratefully acknowledge support by the National Science Foundation under grant number CMMI-1634664.

\numberwithin{equation}{section}
\numberwithin{figure}{section}
\numberwithin{table}{section}

%%%%%%%%%%%%%%%%%%%%%%%%%%%%%%%%%%%%%%%%%%%%%%%%%%%%%%%

\section{Scaling}\label{sec2}
In this section we rescale the integrable Duffing oscillator
\begin{equation}
x_n''+ (a+ (-1)^nb)x_n + x^3_n=0
\label{eq:2.1}
\end{equation}
so that the periodic solution $x_n$ with minimal period $p=2T/n$, positive energy $H$, and amplitude $x_n(0)=A_n>0, \  x'_n(0)=0 $ is normalized to amplitude 1; see \eqref{eq:1.16a}--\eqref{eq:1.17}.
With the abbreviation $\alpha =a+(-1)^nb$ of \eqref{eq:1.9} and the scaling parameter $\varepsilon :=A_n^{-2}$ of \eqref{eq:1.23}, the rescaled solution
\begin{equation}
x(\varepsilon,s):=x_n(t)/A_n \, , \qquad s:=A_nt \, ,
\label{eq:2.2}
\end{equation}
obviously satisfies the rescaled Duffing equation
\begin{eqnarray}
\ddot{x}+ \varepsilon \alpha x + x^3 &=& 0 \,, \label{eq:2.3}  \\
x(\varepsilon,0)=1,\quad \dot{x}(\varepsilon, 0)& =&0 \, ,
\label{eq:2.4}
\end{eqnarray}
with $\dot{ } = \tfrac{d}{ds} \,$ and rescaled minimal period
\begin{equation}
p(\varepsilon)=2TA_n/n \, .
\label{eq:2.5}
\end{equation}

With the positive Hamiltonian energy
\begin{equation}
2H= \dot{x}^2 + \varepsilon \alpha x^2 + \tfrac{1}{2}x^4 \equiv \varepsilon \alpha + \tfrac{1}{2}
\label{eq:2.6}
\end{equation}
of \eqref{eq:2.2} on $x(\varepsilon, s)$, we immediately obtain the explicit solution
\begin{eqnarray}
x(\varepsilon, s)&=& \mathrm{cn} ((1+ \varepsilon \alpha)^{1/2}s, 1/(2(1 + \varepsilon \alpha))) \, ,
\label{eq:2.7}\\
p(\varepsilon)&=&4(1+ \varepsilon \alpha)^{-1/2} \mathrm{K}(1/(2(1+ \varepsilon \alpha)))\, , \label{eq:2.8}  \\
n&=& \varepsilon^{-1/2}\, 2T/p(\varepsilon)\ .
\label{eq:2.9}
\end{eqnarray}
Here $\mathrm{cn}(u,m)$ denotes the \emph{Jacobi elliptic function} with parameter $m$, i.e. the primitive $\int^u_0 (1-m \sin^2v)^{-1/2} dv \, .$
For $u=\pi/2$, we obtain the \emph{complete elliptic integral} $K(k^2):=\mathrm{cn}(\pi/2, k)$.
Indeed claims \eqref{eq:2.7}, \eqref{eq:2.8} follow by integration of the ODE \eqref{eq:2.6} with $x(0)=1$.
Claim \eqref{eq:2.9} follows from \eqref{eq:2.5} and the scaling $A_n=1/ \sqrt{\varepsilon}$.

Let $x_*,\  p_*$ denote the resulting limits of $x(\varepsilon, \cdot), \; p(\varepsilon)$ at $\varepsilon=0$.
Then \eqref{eq:2.7}, \eqref{eq:2.8} imply 
\begin{eqnarray}
x_*(s)&=&\mathrm{cn}(s, \tfrac{1}{2}) \label{eq:2.10} \,, \\
p_*&=& 4K ( \tfrac{1}{2}) = 7.4162987...
\label{eq:2.11}
\end{eqnarray}

In particular, the amplitude $A_n$ is of order $n$ or, in other words, $n$ is of order $1/ \sqrt{\varepsilon}$.

As an aside, we remark that the appearance of elliptic functions in \eqref{eq:2.3}, i.e. of doubly periodic meromorphic functions in complex time, is not a surprise.
Indeed, let $x(t)$ solve \eqref{eq:2.3}, this time with Dirichlet initial condition $x(0)=0$.
Then $\xi(t):=-i x(it)$ solves \eqref{eq:2.3}, with $\alpha$ replaced by $-\alpha$ and with the same initial condition for $\dot{\xi}(t)=\dot{x}(t)$.
In particular solutions which pass through zero are doubly periodic, with one real and one imaginary period, and the periods relate the two opposite signs in the Duffing equation.
See \cite{Akh} for more details, for example.

For later reference we remark that 
\begin{equation}
\int_0^{p_*/2}(\dot{x}_*)^2 ds = \tfrac{1}{6}p_* \ .
\label{eq:2.12}
\end{equation}
Indeed suppression of the index $*$, integration by parts, and $\ddot{x}=-x^3,\ 4H \equiv 1$ yield 
\begin{equation}
\int\limits_0^{p/2}  \dot{x}^2=x\dot{x} \bigg\rvert_0^{p/2}-\int\limits_0^{p/2}x\ddot{x}= \int\limits_0^{p/2} x^4 = \int\limits_0^{p/2}(4H-2\dot{x}^2)=\tfrac{p}{2}-2 \int\limits_0^{p/2} \dot{x}^2 \ ,
\label{eq:2.13}
\end{equation}
which proves claim \eqref{eq:2.12}.
%%%%%%%%%%%%%%%%%%%%%%%%%%%%%%%%%%%%%%%%%%%%%%%%%%%%%%%

\section{Floquet characteristic equation}\label{sec3}
In this section we linearize the scaled delayed Duffing oscillator 
\begin{equation}
\ddot{x}+ \varepsilon a\, x + \varepsilon b \, x (t- \varepsilon^{-1/2}T)+x^3=0
\label{eq:3.1}
\end{equation}
which results from our original Duffing setting \eqref{eq:1.1} via the normalization \eqref{eq:1.23}, \eqref{eq:2.2}. 
Here and below we write $t$ for the rescaled time $s$ again.
Note the large resulting time delay $\varepsilon^{-1/2}T$.
Linearization along the periodic orbits $x(\varepsilon, t)$, as in \eqref{eq:1.10}--\eqref{eq:1.12}, leads to the linear nonautonomous delay equation
\begin{equation}
\ddot{y}+ \varepsilon a\, y + \varepsilon b \, y (t- \varepsilon^{-1/2}T)+3x^2y=0 \, .
\label{eq:3.2}
\end{equation}
Here $x^2(t)$ possesses minimal period $p(\varepsilon)/2$, because $x(t)$ is odd.
Therefore it makes sense to study half-period Floquet multipliers, i.e. nonzero eigenvalues $\mu$ of 
\begin{equation}
\Pi := \partial_{\mathbf{x}}S(p/2, \mathbf{x}_0) \, ,
\label{eq:3.3}
\end{equation}
instead of the full period $p$ in \eqref{eq:1.11}. To deal with the large time delay
\begin{equation}
\varepsilon^{-1/2}T = A_nT=np(\varepsilon)/2
\label{eq:3.4}
\end{equation}
in \eqref{eq:3.2}, according to \eqref{eq:2.5}, we can then insert
\begin{equation}
y(t- \varepsilon^{-1/2}T)= \mu^{-n}y(t)
\label{eq:3.5}
\end{equation}
for any Floquet eigenvector $y_t$ of $\Pi$, and solve
\begin{equation}
\ddot{y}+ \varepsilon (a + (-1)^nb\, \sigma)y+ 3x^2y= 0 \, .
\label{eq:3.6}
\end{equation}
Here we have stealthily introduced
\begin{equation}
\sigma := (- \mu)^{-n} \, .
\label{eq:3.7}
\end{equation}
The crucial idea in our rigorous treatment of (half-period) Floquet multipliers $\mu$, now, is to \emph{regard} $\sigma$ \emph{as an independent complex parameter} in the second order ODE \eqref{eq:3.6}, to be recombined with $\mu$ only later.

We rewrite \eqref{eq:3.6} as a $2 \times 2$ system
\begin{equation}
\begin{aligned}
\dot{y}&= y_1 \,,  \\
\dot{y_1}&= -\varepsilon(a+(-1)^n b\, \sigma)y-3x^2y \,,
\label{eq:3.8}
\end{aligned}
\end{equation}
and denote the linear evolution of \eqref{eq:3.8} by the $2 \times 2$ Wronski matrices $W=W(\varepsilon, \sigma, t_1, t_0)$ as
\begin{equation}
z(t_1):= \bigg(  
\begin{aligned}
& y(t_1) \\
& \dot{y}(t_1)
\end{aligned} \bigg) = W(\varepsilon, \sigma, t_1, t_0)
\bigg( \begin{aligned}
& y(t_0) \\
& \dot{y}(t_0)
\end{aligned} \bigg)
\in \mathbb{C}^2 \, .
\label{eq:3.9}
\end{equation}
With the subsequent abbreviation
\begin{equation}
W(t):= W(\varepsilon, \sigma, t, 0) \, ,
\label{eq:3.10}
\end{equation}
we therefore have a half-period Floquet multiplier $\mu$ if, and only if
\begin{equation}
\mu \in \mathrm{spec} \, W(\tfrac{1}{2}p(\varepsilon)), \qquad \mathrm{with} \quad \sigma=(-\mu)^{-n} \, .
\label{eq:3.11}
\end{equation}
Since the trace of the right hand side of \eqref{eq:3.8} vanishes identically, we observe
\begin{equation}
\mathrm{det} \, W(\varepsilon, \sigma, t_1, t_0)\equiv1 \, .
\label{eq:3.12}
\end{equation}
Therefore \eqref{eq:3.11} is equivalent to the \emph{Floquet characteristic equation}
\begin{equation}
\mu^2- \textrm{tr} \, W (\varepsilon, \sigma, \tfrac{1}{2}p (\varepsilon),0) \, \mu+1= 0, \qquad \mathrm{with} \quad \sigma=(-\mu)^{-n} \, .
\label{eq:3.13}
\end{equation}

We conclude by collecting a few special properties of $W$ at $\sigma=1$ and at $\varepsilon=0$.
At parameter $\sigma=1$, we rewrite \eqref{eq:3.1}, \eqref{eq:3.6} as the original Duffing oscillator
\begin{eqnarray}
\ddot{x} + \varepsilon \alpha x + x^3 & = & 0, \qquad \alpha :=a+(-1)^nb \, , \label{eq:3.14} \\
\ddot{y} + \varepsilon \alpha y + 3x^2y & = & 0 \, .
\label{eq:3.15}
\end{eqnarray}
The two columns $z_\iota= (y, \dot{y})^T$ of $W=(z_1\, \vdots \, z_2)$ are given by their initial conditions $z_1=(1,0)^T$ and $z_2=(0,1)^T$, at $t=0$.
We reintroduce the general amplitude $A$ of (rescaled) $x=x(\varepsilon, A, t)$ via the initial condition 
\begin{equation}
x=A, \quad \dot{x}=0, \qquad \textrm{at} \ t=0 \, ,
\label{eq:3.16}
\end{equation}
and denote the partial derivative $\partial_Ax(\varepsilon, A, t)$ at (rescaled) $A=1$ by $x_A$.
Then
\begin{equation}
z_1=(x_A, \dot{x}_A)^T, \quad z_2= -\tfrac{1}{1+\varepsilon \alpha} (\dot{x}, \ddot{x})^T
\label{eq:3.17}
\end{equation}
both solve the linearization \eqref{eq:3.2} with the appropriate initial conditions, and we obtain the explicit expression
\begin{equation}
W(t)=W(\varepsilon, 1, t, 0)= 
\begin{pmatrix}
      x_A &  -\dot{x} /(1+ \varepsilon \alpha)    \\
      \dot{x}_A & -\ddot{x} /(1+ \varepsilon \alpha)  
\end{pmatrix}
\label{eq:3.18a}
\end{equation}
for the Wronskian at $\sigma=1$.

At $\varepsilon=0$, the linearization \eqref{eq:3.6} and the Floquet characteristic equation \eqref{eq:3.13} become independent of the complex ``parameter" $\sigma=(-\mu)^{-n}$, even though the delay $\varepsilon^{-1/2}T$ in \eqref{eq:3.2} becomes unbounded.
Indeed, in-/stability of $x(\varepsilon, A, t)$ at $A=1$ is only decided by $\mid \sigma \mid \leq 1$, for any $\varepsilon >0$.
This justifies the notation 
\begin{equation}
\begin{aligned}
x_*(t) & := x(0,1,t) \,, \\
W_*(t_1, t_0) & := W(0, \sigma,t_1,t_0) \,,
\label{eq:3.18b}
\end{aligned}
\end{equation}
when we address the limit $\varepsilon=0$ of original amplitudes $A_n=\varepsilon^{-1/2}\nearrow \infty$, below.
\begin{prop} \label{Prop 3.1}
Assume $\sigma =1$ in the setting \eqref{eq:3.14} -- \eqref{eq:3.18b} above, and let $p=p(\varepsilon, A)$ denote the minimal period of (rescaled) $x=x(\varepsilon, A, t)$, with partial derivative $p_A:=\partial_Ap$ at $A=1$.
Then at half-period we obtain the Wronski matrix
\begin{equation}
W(\tfrac{1}{2}p)=W(\varepsilon, 1,\tfrac{1}{2}p,0) = \bigg(
\begin{array}{cc}
-1 & \quad 0 \\
-\tfrac{1}{2}(1+ \varepsilon \alpha) p_A & \quad -1
\end{array}\bigg ) \, .
\label{eq:3.19}
\end{equation}

At $\, \varepsilon=0$, and independently of $\alpha$ and $\sigma$, the partial derivative $p_A$ of the minimal period $p_*= p(0,A)$ at $A=1$ is given by
\begin{equation}
p_A=-p_*=-4K(1/2)=-7.4162987...
\label{eq:3.20a}
\end{equation}
as detailed in \eqref{eq:2.11}.
For the Wronskians at $\varepsilon=0, \ x=x_*(t)$ we obtain
\begin{equation}
W_*(t, 0)=W(0, \sigma, t, 0) = \bigg(
\begin{array}{cc}
x+t\dot{x} & \quad - \dot{x} \\
2 \dot{x}+t \ddot{x} & \quad -\ddot{x}
\end{array}\bigg ) \, ,
\label{eq:3.20b}
\end{equation}
\begin{equation}
W_*(\tfrac{1}{2}p_*, t) =W(0, \sigma, \tfrac{1}{2}p_*, t) = \bigg(
\begin{array}{cc}
\ddot{x} & \quad - \dot{x} \\
2 \dot{x}-(\tfrac{1}{2}p_*-t)\ddot{x} & \quad -x+(\tfrac{1}{2}p_*-t)\dot{x}
\end{array}\bigg ) \, .
\label{eq:3.20c}
\end{equation}
\end{prop}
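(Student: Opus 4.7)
My plan is to treat the four assertions in order, with the first two being quick consequences of the definitions and of scaling, and the last two obtained from propagation.

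First, for \eqref{eq:3.19}, formula \eqref{eq:3.18a} already expresses $W(t)=W(\varepsilon,1,t,0)$ in terms of the reference orbit $x=x(\varepsilon,1,t)$ and its amplitude derivative $x_A$. It therefore suffices to evaluate the four entries at the half-period. Oddness of the periodic orbit \eqref{eq:1.4} gives $x(p/2)=-1$ and $\dot x(p/2)=0$, hence $\ddot x(p/2)=-\varepsilon\alpha x(p/2)-x^3(p/2)=1+\varepsilon\alpha$. This already yields the right-hand column $(0,-1)^T$. For the left-hand column I differentiate the defining identities
$$x\bigl(\varepsilon,A,\tfrac{1}{2}p(\varepsilon,A)\bigr)=-A,\qquad \dot x\bigl(\varepsilon,A,\tfrac{1}{2}p(\varepsilon,A)\bigr)=0,$$
with respect to $A$ at $A=1$. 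Using $\dot x(p/2)=0$ in the first and $\ddot x(p/2)=1+\varepsilon\alpha$ in the second produces $x_A(p/2)=-1$ and $\dot x_A(p/2)=-\tfrac{1}{2}(1+\varepsilon\alpha)\,p_A$, as claimed.

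Second, \eqref{eq:3.20a} follows from the scaling invariance of the non-delayed Duffing equation $\ddot x+x^3=0$. If $x_*(s)$ denotes the unit-amplitude reference solution of period $p_*$, then $x(0,A,t)=A\,x_*(At)$ solves the $\varepsilon=0$ equation with amplitude $A$, so $p(0,A)=p_*/A$ and $p_A(0,1)=-p_*$. Differentiating the scaling identity at $A=1$ also yields $x_A(0,1,t)=x_*(t)+t\,\dot x_*(t)$, which I shall reuse below.

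Third, for \eqref{eq:3.20b} I exhibit two solutions of the $\varepsilon=0$ linearization $\ddot y+3x_*^2y=0$ with the canonical initial data. Differentiating $\ddot x_*+x_*^3=0$ in $t$ gives $\dddot x_*+3x_*^2\dot x_*=0$, so $y=\dot x_*$ is a solution with $y(0)=0$, $\dot y(0)=\ddot x_*(0)=-1$; thus $z_2=-(\dot x_*,\ddot x_*)^T$. The amplitude derivative $x_A$ is, by construction, always a solution of the linearization, and by the scaling computation above it equals $x_*(t)+t\dot x_*(t)$ at $\varepsilon=0$, $A=1$, with the correct initial data $(1,0)^T$; this supplies $z_1$. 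Assembling $z_1,z_2$ as columns yields \eqref{eq:3.20b}.

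Fourth, \eqref{eq:3.20c} follows from the group property $W_*(\tfrac{1}{2}p_*,t)=W_*(\tfrac{1}{2}p_*,0)\cdot W_*(t,0)^{-1}$. Specialising \eqref{eq:3.19} at $\varepsilon=0$ and using \eqref{eq:3.20a} gives $W_*(\tfrac{1}{2}p_*,0)=\bigl(\begin{smallmatrix}-1&0\\ \tfrac{1}{2}p_*&-1\end{smallmatrix}\bigr)$, and $W_*(t,0)^{-1}$ is read off from \eqref{eq:3.20b} by the standard $2\times 2$ inversion, with $\det W_*(t,0)\equiv 1$ coming from the vanishing trace in \eqref{eq:3.8} (as a cross-check, the Hamiltonian $\dot x_*^2+\tfrac{1}{2}x_*^4=\tfrac{1}{2}$ gives $-x_*\ddot x_*+2\dot x_*^2=x_*^4+(1-x_*^4)=1$). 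Multiplying out and collecting the $t$-dependent terms into the factor $(\tfrac{1}{2}p_*-t)$ produces \eqref{eq:3.20c}.

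The conceptually hardest step is the third, where one must recognise that the linearized flow at $\varepsilon=0$ admits the two \emph{independent} explicit solutions $\dot x_*$ and $x_*+t\dot x_*$; neither is individually surprising, but together they give a closed-form Wronskian that makes the later asymptotic analysis tractable. The remaining work is careful sign and initial-condition bookkeeping, and one routine $2\times 2$ matrix product.
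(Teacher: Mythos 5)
Your proof is correct and follows essentially the same route as the paper: oddness at the half-period for \eqref{eq:3.19}, the scaling $x(0,A,t)=Ax_*(At)$ for \eqref{eq:3.20a} and the explicit column $x_A=x+t\dot x$, and the group property $W_*(\tfrac12 p_*,t)=W_*(\tfrac12 p_*,0)W_*(t,0)^{-1}$ for \eqref{eq:3.20c}. The only (harmless) cosmetic differences are that you differentiate the evaluated identities $x(p/2)=-A$, $\dot x(p/2)=0$ rather than the full functional identity $x(\cdot+p/2)=-x(\cdot)$, and you additionally verify directly that $\dot x_*$ and $x_*+t\dot x_*$ solve the $\varepsilon=0$ linearization.
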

\begin{proof}
To prove claim \eqref{eq:3.19}, we just note that
\begin{equation}
x(\varepsilon, A, \tfrac{1}{2}p(\varepsilon,A)+t)=-x(\varepsilon, A, t)
\label{eq:3.21}
\end{equation}
at half-periods, by oddness of the periodic solutions $x$; see \eqref{eq:1.4}.
Differentiation of \eqref{eq:3.21} with respect to $t$, at $t=0$ and $A=1$, and subsequent insertion into \eqref{eq:3.18b} with $t=\tfrac{1}{2}p$, proves that the second column of \eqref{eq:3.19} follows from the initial condition on $z_2$.
Differentiation of \eqref{eq:3.21} with respect to $A$, at $A=1$, shows 
\begin{equation}
x_A(\varepsilon,1, \tfrac{1}{2}p+t)+ \dot{x}(\varepsilon, 1,\tfrac{1}{2}p+t)\cdot \tfrac{1}{2}p_A = -x_A(\varepsilon,1,t) \, .
\label{eq:3.22}
\end{equation}
Insertion of $t=0$, of $\dot{x}=0$ at $ \tfrac{1}{2}p$, and of $x_A=1$ at $t=0$, provides the upper left entry $x_A(\varepsilon, 1, \tfrac{1}{2}p)=-1$ of claim \eqref{eq:3.19}.
To determine the remaining lower left entry $\dot{x}_A$ we differentiate \eqref{eq:3.22} with respect to $t$, at $t=0$, and obtain
\begin{equation}
\dot{x}_A(\tfrac{1}{2}p)=-\tfrac{1}{2}p_A \cdot \ddot{x}(\tfrac{1}{2}p)- \dot{x}_A(0)= \tfrac{1}{2}p_A \cdot \ddot{x}(0) = -\tfrac{1}{2}p_A \cdot (1+ \varepsilon \alpha) \, .
\label{eq:3.23}
\end{equation}
Here we have used the initial condition  $\dot{x}_A(0)=0$, oddness \eqref{eq:3.21} to replace $\ddot{x}(\tfrac{1}{2}p) = - \ddot{x}(0)$, and the ODE \eqref{eq:3.14} with $x(0)=1$ to evaluate $\ddot{x}(0)$.
We also suppressed the arguments $A=1$ and $\varepsilon$.
This proves claim \eqref{eq:3.19}.

To prove claims \eqref{eq:3.20a} -- \eqref{eq:3.20c}, we observe that 
\begin{equation}
x^A(t):=Ax(At)
\label{eq:3.24}
\end{equation}
solves the pure cubic Duffing oscillator \eqref{eq:3.14}, for $\varepsilon = 0$, if and only if $x(t)$ does.
In particular, the scaling invariance \eqref{eq:3.24} and the initial conditions \eqref{eq:3.16} imply
\begin{equation}
x(0, A, t)= Ax_*(At) \, ,
\label{eq:3.25}
\end{equation}
where $x_*(t)=x(0,1,t)$ as in \eqref{eq:3.18b}.
In particular, the minimal periods at $\varepsilon = 0$ are given by 
\begin{equation}
p(0, A)=p_*/A \, ,
\label{eq:3.26}
\end{equation}
which proves claim \eqref{eq:3.20a}.
For the partial derivative $x_A= \partial_A x(0, A, t)$ at $A=1$ in the Wronskian \eqref{eq:3.18a} at $\varepsilon=0$, \eqref{eq:3.25} implies
\begin{equation}
x_A=x + t \dot{x}\,,
\label{eq:3.27}
\end{equation}
at $x=x_*(t)$.
Insertion into \eqref{eq:3.18a} yields \eqref{eq:3.20b}.
Since $\textrm{det} \, W_*(t,0)=1$, by \eqref{eq:3.12}, this allows us to calculate
\begin{equation}
\begin{aligned}
W_*(\tfrac{1}{2}p_*,t) =&\ W_*(\tfrac{1}{2}p_*,0) \cdot W_*(0,t)= W_*(\tfrac{1}{2}p_*,0) \cdot W_*(t,0)^{-1} = \\
=&\ \bigg(
\begin{array}{cc}
-1 & 0 \\
\tfrac{1}{2}p_* & -1
\end{array}\bigg )\cdot \bigg(
\begin{array}{cc}
-\ddot{x} & \dot{x} \\
-2\dot{x} - t \ddot{x} & x+t\dot{x}
\end{array}\bigg )  \, .
\label{eq:3.28}
\end{aligned}
\end{equation}
Here we have used \eqref{eq:3.19} and \eqref{eq:3.20a} to evaluate $W_*(\tfrac{1}{2}p_*,0)$, and \eqref{eq:3.20b} to evaluate the unimodular inverse $W_*(t,0)^{-1}$.
Performing the matrix multiplication proves the remaining claim \eqref{eq:3.20c}, and the proposition.
\end{proof}

%%%%%%%%%%%%%%%%%%%%%%%%%%%%%%%%%%%%%%%%%%%%%%%%%%%%%%%

\section{Wronski trace expansion}\label{sec4}
In the characteristic equation \eqref{eq:3.13}, the trace of the half-period Wronski matrix
\begin{equation}
W(\tfrac{1}{2}p(\varepsilon))=W(\varepsilon, \sigma, \tfrac{1}{2}p(\varepsilon),0)
\label{eq:4.1}
\end{equation}
plays the decisive role for Floquet in-/stability.
Here $W(\varepsilon, \sigma,t,0)$ accounts for the linearization \eqref{eq:3.6}, \eqref{eq:3.8} with complex parameter $|\sigma| \leqslant 1$ along the periodic solution $x=x(\varepsilon,t)$ of \eqref{eq:3.14} with minimal period $p=p(\varepsilon)$.
Note analyticity of $x,p$ and $W$ in all variables.
We collect the relevant expansions at $\sigma=1$ and at $\varepsilon =0$, as follows.
\begin{prop} \label{Prop 4.1}
Denoting partial derivatives by indices we have
\begin{equation}
W_\sigma(0, \sigma, t,0)=0 \, ,
\label{eq:4.2}
\end{equation}
at $\varepsilon=0$, for all $\sigma, t$.
For the trace of $W$ we obtain the expansion 
\begin{equation}
\mathrm{tr} \, W(\varepsilon, \sigma, \tfrac{1}{2}p(\varepsilon),0)= -2-2\varepsilon \cdot (\sigma -1) \cdot \uptau (\varepsilon, \sigma)
\label{eq:4.3}
\end{equation}
with the $\sigma$-independent limiting value 
\begin{equation}
\uptau_*:= \uptau(0, \sigma) = -\tfrac{1}{24}p^2_*\cdot(-1)^nb
\label{eq:4.4}
\end{equation}
for the analytic function $\uptau=\uptau(\varepsilon, \sigma)$.
\end{prop}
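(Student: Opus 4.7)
The plan splits into three phases.

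For claim \eqref{eq:4.2}, I write \eqref{eq:3.8} as $\dot W = B(\varepsilon,\sigma,t)\,W$ with
$$B(\varepsilon,\sigma,t) = \begin{pmatrix} 0 & 1 \\ -\varepsilon(a+(-1)^n b\,\sigma) - 3\,x(\varepsilon,t)^2 & 0 \end{pmatrix}.$$
At $\varepsilon=0$ the entries of $B$ carry no $\sigma$-dependence whatsoever, and the initial datum $W(0)=\mathrm{Id}$ is also $\sigma$-independent. Uniqueness of solutions to linear ODEs therefore forces $W(0,\sigma,t,0)=W_*(t,0)$ to be $\sigma$-independent, which is \eqref{eq:4.2}.

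To establish the product form \eqref{eq:4.3}, I set $F(\varepsilon,\sigma):=\mathrm{tr}\,W(\varepsilon,\sigma,\tfrac{1}{2}p(\varepsilon),0)+2$ and locate its zero set. Equation \eqref{eq:3.19} of Proposition \ref{Prop 3.1} shows $F(\varepsilon,1)\equiv 0$. Combined with \eqref{eq:4.2}, inserting $t=\tfrac{1}{2}p_*$ in \eqref{eq:3.19} at $\varepsilon=0$ yields $F(0,\sigma)\equiv 0$. Since $F$ is analytic in $(\varepsilon,\sigma)$, two successive applications of analytic division, first by $\sigma-1$ using $F(\varepsilon,1)\equiv 0$ and then by $\varepsilon$ using the fact that the quotient still vanishes on $\{\varepsilon=0\}$, produce a unique analytic $\uptau(\varepsilon,\sigma)$ with $F(\varepsilon,\sigma)=-2\varepsilon(\sigma-1)\uptau(\varepsilon,\sigma)$.

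To identify $\uptau_*=\uptau(0,\sigma)$, I evaluate the mixed partial at $(\varepsilon,\sigma)=(0,1)$ two ways. The factorization gives $F_{\sigma\varepsilon}(0,1)=-2\uptau(0,1)$. On the other hand, \eqref{eq:4.2} collapses the chain rule to $F_{\sigma\varepsilon}(0,1)=\mathrm{tr}\,W_{\sigma\varepsilon}(0,1,\tfrac{1}{2}p_*,0)$. Setting $V(t):=W_{\sigma\varepsilon}(0,1,t,0)$ and differentiating $\dot W=BW$ in $\sigma$ and then $\varepsilon$, and using $W_\sigma|_{\varepsilon=0}=0$ together with $B_\sigma|_{\varepsilon=0}=0$, one obtains the inhomogeneous equation
$$\dot V = B_*(t)\,V + B_{\sigma\varepsilon}(t)\,W_*(t,0), \qquad V(0)=0,$$
whose coefficients are $\sigma$-independent; in particular $B_{\sigma\varepsilon}(t)=-(-1)^n b\,E$ with $E=\bigl(\begin{smallmatrix}0&0\\1&0\end{smallmatrix}\bigr)$. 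Variation of parameters together with the cyclic property of the trace yields
$$\mathrm{tr}\,V(\tfrac{1}{2}p_*) = (-1)^{n+1} b \int_0^{p_*/2} \mathrm{tr}\,\bigl[W_*(\tfrac{1}{2}p_*,s)\,E\,W_*(s,0)\bigr]\,ds.$$
Substituting the closed forms \eqref{eq:3.20b}, \eqref{eq:3.20c}, the mixed $x\dot x$ cross terms cancel and the integrand reduces to $(-1)^{n+1}b\cdot(-\tfrac{1}{2}p_*\,\dot x_*^2(s))$. Identity \eqref{eq:2.12} then evaluates the integral as $\tfrac{1}{12}(-1)^n b\,p_*^2$, giving $\uptau_*=-\tfrac{1}{24}(-1)^n b\,p_*^2$.

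The main obstacle will be the algebraic cancellation inside the integrand: four mixed monomials in $x_*, \dot x_*, \ddot x_*$ must telescope to $-\tfrac{1}{2}p_*\,\dot x_*^2$, a collapse that depends critically on the representation \eqref{eq:3.20c} derived from the scaling identity \eqref{eq:3.25}. The $\sigma$-independence asserted by $\uptau_*=\uptau(0,\sigma)$ requires no separate argument, since the variational system for $V$ is manifestly $\sigma$-independent at $\varepsilon=0$.
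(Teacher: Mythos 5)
Your proposal is correct and follows essentially the same route as the paper: $\sigma$-independence of the coefficient matrix at $\varepsilon=0$ for \eqref{eq:4.2}, analytic division off the zero sets $\{\sigma=1\}$ and $\{\varepsilon=0\}$ for \eqref{eq:4.3}, and the variation-of-constants computation of $\mathrm{tr}\,W_{\varepsilon\sigma}(0,1,\tfrac12 p_*,0)$ with \eqref{eq:2.12} for \eqref{eq:4.4}. Your final remark is a slight (valid) variant of the paper's last step — you get constancy of $\uptau(0,\sigma)$ from the $\sigma$-independence of $W_{\sigma\varepsilon}$ at $\varepsilon=0$ (which still needs the one-line integration in $\sigma$ using $F_\varepsilon(0,1)=0$), whereas the paper equivalently shows $W_\varepsilon(0,\sigma,\tfrac12 p_*,0)$ is affine linear in $\sigma$.
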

\begin{proof}
With the notation $W(t)=W(\varepsilon, \sigma, t,0)$, $ \ \dot{W}(t)= \partial_tW(\varepsilon, \sigma, t, 0)$, and $W_*$ to indicate $\varepsilon=0$, we first recall the matrix ODE
\begin{equation}
\dot{W}= \mathcal{A}W \, .
\label{eq:4.4a}
\end{equation}
Here the $2 \times 2$ matrix $\mathcal{A}= \mathcal{A}(\varepsilon, \sigma, t)$ abbreviates the right hand side of \eqref{eq:3.8}, 
\begin{equation}
\mathcal{A} =\bigg(
\begin{array}{cc}
0 & \quad 1 \\
-\varepsilon(a+(-1)^n b\, \sigma)-3x^2 & \quad 0
\end{array} \bigg)  \, ,
\label{eq:4.5}
\end{equation}
and $W(0)= \mathrm{id} \,$.
Independence of $\mathcal{A}$ on $\sigma$, at $\varepsilon=0$, proves claim \eqref{eq:4.2}; see also \eqref{eq:3.20b}.

To show claim \eqref{eq:4.3} we recall that \eqref{eq:3.19} of proposition \ref{Prop 3.1} shows 
\begin{equation}
\mathrm{tr} \, W(\varepsilon, \sigma, \tfrac{1}{2}p(\varepsilon), 0)= -2
\label{eq:4.6}
\end{equation}
at $\sigma =1$.
For $\varepsilon =0$, where $x=x_*$ and $W$ are independent of $\sigma$, insertion of $t=0$ in \eqref{eq:3.20c} shows that the trace in \eqref{eq:4.6} coincides with $\ddot{x}-x=-x^3-x=-2$ at $t=0$, likewise.
By analyticity, this proves \eqref{eq:4.3} with analytic $\uptau$.

To calculate $\uptau$ at $\varepsilon =0$, as required in \eqref{eq:4.4}, we first determine $\uptau (0, \sigma)$ at $\sigma =1$.
By \eqref{eq:4.3}, we simply have to calculate the mixed partial derivative
\begin{equation}
\uptau (0,1)= -\tfrac{1}{2} \textrm{tr} \, W_{\varepsilon \sigma}(0, 1, \tfrac{1}{2}p_*,0) \, .
\label{eq:4.7}
\end{equation}
Differentiation of \eqref{eq:4.4a} at $\varepsilon=0,\ \sigma =1$ yields the inhomogeneous linear equation
\begin{equation}
\dot{W}_{\varepsilon \sigma}= \mathcal{A}W_{\varepsilon \sigma} + \mathcal{A}_{\varepsilon \sigma}W
\label{eq:4.8}
\end{equation}
with initial condition $W_{\varepsilon \sigma}(0,1,t,0)=0$ at $t=0$.
Indeed $W_\sigma \equiv 0 \equiv \mathcal{A}_\sigma$ at $\varepsilon =0$, by \eqref{eq:4.2} and \eqref{eq:4.5}.
This prevents the terms $\mathcal{A}_{\varepsilon}W_\sigma$ and $\mathcal{A}_\sigma W_\varepsilon$ from appearing in \eqref{eq:4.8}.
Solving \eqref{eq:4.8} by variation-of-constants, suppressing indices $*$ as usual, and successively invoking \eqref{eq:3.20c}, \eqref{eq:4.5}, \eqref{eq:3.20b}, \eqref{eq:2.12}, we obtain
{
\medmuskip=0mu
\thinmuskip=0mu
\thickmuskip=0mu
\begin{equation}
\begin{aligned}
&-2 \uptau (0,1)\ =\ \textrm{tr} \int^{p/2}_0 W(\tfrac{1}{2}p,t) \ \mathcal{A}_{\varepsilon \sigma}(t)\ W(t,0)\phantom{,}dt \ = \\
&= \mathrm{tr} \int ^{p/2}_0 \bigg(
\begin{array}{cc}
\ddot{x} & \qquad -\dot{x} \\
2\dot{x}-(\tfrac{1}{2}p-t) \ddot{x} & -x+ ( \tfrac{1}{2}p-t)\dot{x}
\end{array} \bigg) \bigg(
\begin{array}{cc}
0 & 0 \\
-(-1)^nb & 0
\end{array} \bigg)
\bigg(
\begin{array}{cc}
x+t\dot{x} & -\dot{x} \\
2\dot{x}+ t\ddot{x} & -\ddot{x} 
\end{array} \bigg) dt \\
&=\ -(-1)^nb \int^{p/2}_0(-\dot{x}(x+t\dot{x})- \dot{x}(-x+ \tfrac{1}{2}(p-t)\dot{x}))\phantom{,}dt \ = \\
&=\ (-1)^nb \int^{p/2}_0 \tfrac{1}{2}p_*\dot{x}^2_*(t)\phantom{,}dt\ =\ \tfrac{1}{12}(-1)^nb p_*^2 \phantom{,} .
\label{eq:4.9}
\end{aligned}
\end{equation}
}
This proves claim \eqref{eq:4.4} at $\sigma=1$.

To show that the remaining values of $\uptau(0, \sigma)$, for $ \sigma \neq 1$, do not depend on $\sigma$, we first differentiate \eqref{eq:4.3} with respect to $\varepsilon$, at $\varepsilon =0$.
It is sufficient to prove that $W_\varepsilon (0, \sigma, \tfrac{1}{2}p_*,0)$ and $\dot{W}(0, \sigma, \tfrac{1}{2}p_*,0)$ are affine linear in $\sigma$.
The term $\dot{W}$ is actually independent of $\sigma$, by \eqref{eq:4.2}.
For $W_\varepsilon (0, \sigma, t, 0)$ we obtain, analogously to \eqref{eq:4.8}, that
\begin{equation}
\dot{W}_\varepsilon= \mathcal{A}W_\varepsilon + \mathcal{A}_\varepsilon W
\label{eq:4.10}
\end{equation}
with $W_\varepsilon = 0$ at $t=0$.
Since the lower left entry $-(a+(-1)^nb\, \sigma)- 6xx_\varepsilon$ of $\mathcal{A}_\varepsilon$ is the only nonzero entry of $\mathcal{A}_\varepsilon$, and since $\mathcal{A}$ and $W$ are independent of $\sigma$ at $\varepsilon=0$, the variations-of-constants formula shows that $W_\varepsilon(0, \sigma, \tfrac{1}{2}p_*0)$ is indeed affine linear in $\sigma$.
This  proves claim \eqref{eq:4.4}, in general, and the proposition.
\end{proof}

%%%%%%%%%%%%%%%%%%%%%%%%%%%%%%%%%%%%%%%%%%%%%%%%%%%%%%%

\section{Proof of main results}\label{sec5}
The proof of our main results is based on the Floquet characteristic equation \eqref{eq:3.13}.
Inserting the trace expansion \eqref{eq:4.3}, \eqref{eq:4.4} this equation becomes
\begin{equation}
\mu^2+2(1+(\sigma-1)\uptau \varepsilon) \mu +1 =0 \, .
\label{eq:5.0a}
\end{equation}
The algebraically double trivial solution $\mu = -1$, for $\varepsilon=0$, suggests to explore an expansion
\begin{equation}
\mu= -1 + \sqrt{\varepsilon} \eta
\label{eq:5.0b}
\end{equation}
of the half-period Floquet multiplier $\mu$.
Since $\sigma =(-\mu)^{-n}$ also depends on $\eta$, and the power $n$ itself grows like $1/ \sqrt{\varepsilon}$, the characteristic equation \eqref{eq:5.0a} is quite implicit in the scaled \emph{exponent} $\eta$.
One main tool in our analysis will be the nontrivial limits $\eta_*$ and $\sigma_*$ of $\eta$ and $\sigma$, respectively, for $\varepsilon \searrow 0$.

We prove our main result, theorem \eqref{thm:1.1}, in two steps.
Based on expansions for $\mu$ and $\sigma =(-\mu)^{-n}$, we first consider the case of small $T>0$.
In that case, theorem \ref{thm:5.1} provides a quantitative expansion of the leading un-/stable half-period Floquet multiplier $\mu$, in terms of $\sigma:=(-\mu)^{-n}$.
In proposition \ref{Prop 5.2} we then extend the resulting in-/stability to larger $T$. 
In fact, we assert that half-period Floquet multipliers $\mu$ cannot cause $\sigma$ to cross the unit circle $|\sigma|=1$, and thus cannot change the in-/stability result of theorem \ref{thm:5.1} \emph{qualitatively}, as long as the crucial condition 
\begin{equation}
0 \neq (-1)^{n+1}b\,T^2 < \tfrac{3}{2} \pi^2
\label{eq:5.1}
\end{equation}
remains valid.
\begin{thm} \label{thm:5.1}
Uniformly for bounded $|\sigma|$, for small $0<T<T_0$, and for small $0\leq \varepsilon < \varepsilon_0 := \delta_0T_0^2$, we obtain an analytic expansion $(-\mu)^{-n}= \sigma = \sigma(\sqrt{\varepsilon}, T)\in \mathbb{R}$ for the nontrivial half-period Floquet multiplier $\mu$.
For $\varepsilon=0$, the $T$-expansion reads
\begin{equation}
\sigma_*(T):= \sigma(0, T)=1- \tfrac{1}{3}(-1)^n b T^2 + \ldots 
\label{eq:5.2}
\end{equation}
Because $|\sigma|<1$ indicates instability, and  $|\sigma|>1$ indicates stability, this confirms the in-/stability claims of theorem \ref{thm:1.1} for small $T$ and $\varepsilon$; see \eqref{eq:1.20a}, \eqref{eq:1.20b}.
\end{thm}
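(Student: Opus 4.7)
The plan is to extract the nontrivial half-period Floquet multiplier from the reduced characteristic equation (5.0a) together with the resonance condition $\sigma = (-\mu)^{-n}$, and to bifurcate it from the doubly degenerate value $\mu = -1$ that both these conditions produce at $\varepsilon = 0$. The central observation that makes the expansion feasible is the identity $n\sqrt{\varepsilon} = 2T/p(\varepsilon)$ from (2.9): although the exponent $n$ is large, its product with $\sqrt{\varepsilon}$ is bounded and analytic, so the apparently singular power $(-\mu)^{-n}$ becomes an analytic function of $(\sqrt{\varepsilon}, \eta, T)$ once we pass to $\mu = -1 + \sqrt{\varepsilon}\eta$. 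This turns the problem into a standard implicit function theorem argument for a coupled scalar system.

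First I would insert the ansatz $\mu = -1 + \sqrt{\varepsilon}\eta$ into (5.0a) and divide by $\varepsilon$, obtaining the reduced equation
\begin{equation*}
\eta^2 + 2(\sigma - 1)\,\uptau(\varepsilon,\sigma)\sqrt{\varepsilon}\,\eta - 2(\sigma - 1)\,\uptau(\varepsilon,\sigma) = 0,
\end{equation*}
with $\varepsilon = 0$ limit $\eta^2 = 2(\sigma - 1)\uptau_*$. In parallel I would rewrite the resonance condition as
\begin{equation*}
\sigma = (1 - \sqrt{\varepsilon}\eta)^{-n} = \exp\!\left(\tfrac{2T}{p(\varepsilon)}\bigl(\eta + \tfrac{1}{2}\sqrt{\varepsilon}\eta^2 + \cdots\bigr)\right),
\end{equation*}
which is manifestly analytic in $(\sqrt{\varepsilon}, \eta, T)$ with limit $\sigma_* = \exp(2T\eta/p_*)$ at $\varepsilon = 0$. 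Eliminating $\eta$ via the rescaling $\zeta := 2T\eta/p_*$ and substituting the value $\uptau_* = -p_*^2(-1)^n b/24$ from Proposition 4.1 collapses the two limit equations into the single implicit scalar equation
\begin{equation*}
\zeta^2 = -\tfrac{T^2}{3}(-1)^n b\,(e^\zeta - 1).
\end{equation*}

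Next I would extract the nontrivial branch by writing $\zeta = T^2 w$. Dividing by $T^4$ and expanding $e^\zeta - 1 = \zeta + O(\zeta^2)$ yields $w^2 + \tfrac{1}{3}(-1)^n b\,w + O(T^2) = 0$, whose nontrivial root $w_0 = -(-1)^n b/3$ is nondegenerate as long as $b\neq 0$, since the derivative in $w$ at $w_0$ equals $-(-1)^n b/3 \neq 0$. The implicit function theorem then produces an analytic solution $w(T)$, hence $\zeta(T) = -(-1)^n b T^2/3 + O(T^4)$ and finally $\sigma_*(T) = e^{\zeta(T)} = 1 - \tfrac{1}{3}(-1)^n b T^2 + O(T^4)$, which is the announced expansion. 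To reintroduce $\sqrt{\varepsilon}$, I would treat it as a second small parameter; since all corrections in the coupled system remain analytic in $(\sqrt{\varepsilon}, T, w)$, the same IFT still applies uniformly provided the ratio $\sqrt{\varepsilon}/T$ stays bounded, which is exactly what the hypothesis $\varepsilon_0 = \delta_0 T_0^2$ guarantees. The in-/stability dichotomy then follows immediately: $\sigma_*(T)$ is real with $\mathrm{sign}(\sigma_*(T) - 1) = -\mathrm{sign}((-1)^n b)$ for small $T$, so $|\sigma| < 1$ iff $(-1)^n b > 0$ (instability) and $|\sigma| > 1$ iff $(-1)^n b < 0$ (stability), confirming (1.20a), (1.20b) in this regime.

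The hard part, I expect, will be the triple interaction between the two small parameters $\varepsilon, T$ and the large exponent $n\sim 1/\sqrt{\varepsilon}$. Handling it cleanly forces the nested rescalings $\mu \mapsto \eta \mapsto \zeta \mapsto w$ to be performed in exactly the right order and with exactly the right powers: the first blows up the double root at $\mu = -1$, the second absorbs the large power $n$ into a bounded quantity through the identity $n\sqrt{\varepsilon} = 2T/p(\varepsilon)$, and the third extracts a nondegenerate algebraic equation whose leading coefficient is precisely $-(-1)^n b/3$. Verifying uniformity of the IFT throughout the scaling regime $0 \le \varepsilon < \delta_0 T_0^2$, and confirming that $\sigma$ stays real along the branch, are the most delicate bookkeeping steps.
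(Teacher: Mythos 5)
Your proposal follows essentially the same route as the paper's proof: the ansatz $\mu=-1+\sqrt{\varepsilon}\,\eta$, the reduced quadratic \eqref{eq:5.4}, the identity $n\sqrt{\varepsilon}=2T/p(\varepsilon)$ to render $\sigma=(-\mu)^{-n}=\exp(z)$ jointly analytic in $(\sqrt{\varepsilon},T,\eta)$, elimination of $\sigma$, and the implicit function theorem at the algebraically simple nontrivial root, with the same value of $\uptau_*$ producing $1-\tfrac13(-1)^nbT^2$; your extra rescaling $\zeta=T^2w$ and selection of the nonzero root of the quadratic in $w$ are only cosmetic variants of the paper's explicit cancellation of the trivial factor $\eta$ and direct solution of \eqref{eq:5.9} for $\eta=\eta(\sqrt{\varepsilon},T)$ near $(0,0)$. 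One correction: the hypothesis $0\leq\varepsilon<\delta_0T_0^2$, $0<T<T_0$ does \emph{not} bound the ratio $\sqrt{\varepsilon}/T$ (take $T\ll\sqrt{\varepsilon}$), but no such bound is needed --- since, as you yourself observe, the rescaled equation is jointly analytic in $(\sqrt{\varepsilon},T,w)$ and the root is simple, the implicit function theorem already yields a uniform analytic branch on a full neighborhood of $(\sqrt{\varepsilon},T)=(0,0)$, which is exactly how the paper argues.
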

\begin{proof}
We proceed in four steps.
First we address the quadratic Floquet characteristic equation in the form \eqref{eq:5.0a}, with analytic $\uptau= \uptau(\varepsilon, \sigma)$ from proposition \ref{Prop 4.1}.
In step 2 we insert the analytic expansion of proposition \ref{Prop 4.1} for $\sigma = (-\mu)^{-n}$, in terms of $\sqrt{\varepsilon}, T, \eta $, where $\mu = -1+ \sqrt{\varepsilon} \eta$ as in \eqref{eq:5.0b}.
This eliminates $\sigma$.
In step 3, we solve the remaining equation for $\eta= \eta(\sqrt{\varepsilon}, T)$, by the implicit function theorem.
Insertion of $\eta_*= \eta(0, T)$ and expansion with respect to $T$ will complete the proof, in step 4.

\emph{Step 1: Floquet characteristic equation}\\
Insertion of $\mu = -1 + \sqrt{\varepsilon} \eta$ into \eqref{eq:5.0a} and some cancellations yield
\begin{equation}
\eta^2-2(\sigma-1) \uptau \cdot (1- \sqrt{\varepsilon} \eta)=0 \, .
\label{eq:5.4}
\end{equation}
This uniformly quadratic equation for $\eta$, and analyticity of $\uptau = \uptau(\varepsilon, \sigma)$, guarantee that $\eta$ remains bounded, a priori, uniformly for bounded $|\sigma|$ and small $\varepsilon \geq 0$.
Solving for $\sigma -1$, we obtain the equation
\begin{equation}
\sigma-1 = \frac{\eta^2}{2 \uptau \cdot (1-\sqrt{\varepsilon} \eta)} \, ,
\label{eq:5.5}
\end{equation}
which is still implicit in $\sigma$ via the trace term $\uptau = \uptau(\varepsilon, \sigma)$.

\emph{Step 2: Expansion of $\sigma$}\\
Insertion of $\mu= -1+ \sqrt{\varepsilon} \eta$ in $\sigma = (-\mu)^{-n}$ provides
\begin{equation}
\sigma-1 = (-\mu)^{-n}-1= \exp \, (-n \, \mathrm{log} \, (1- \sqrt{\varepsilon}\eta))-1=zh(z)
\label{eq:5.6}
\end{equation}
with the abbreviations
\begin{equation}
h(z):= (\exp (z)-1)/z, \quad h(0):=1, \qquad z:=-n \, \mathrm{log}(1-\sqrt{\varepsilon} \eta)  \, .
\label{eq:5.7}
\end{equation}
Note that the auxiliary function $h(z)$ is entire.
Replacing the integer $n$ by $2T/(p(\varepsilon)\sqrt{\varepsilon})$, as in \eqref{eq:3.4}, and expanding the logarithm, we obtain
\begin{equation}
z= \tfrac{2T}{p(\varepsilon)} \eta \sum^\infty_{k=0} \tfrac{1}{k+1} (\sqrt{\varepsilon }\eta)^k \, .
\label{eq:5.8}
\end{equation}

\emph{Step 3: Elimination of $\sigma$}\\
Equating expression \eqref{eq:5.5} with \eqref{eq:5.6}, \eqref{eq:5.8}, and cancelling out the trivial multiplier case $\eta = 0$, we obtain the implicit equation
\begin{equation}
\eta = \tfrac{2T}{p(\varepsilon)} \Phi (\sqrt{\varepsilon}, T, \eta) 
\label{eq:5.9}
\end{equation}
for $\eta$. The somewhat messy analytic expression $\Phi$ is given explicitly as
\begin{equation}
\Phi=2 \uptau \cdot(1- \sqrt{\varepsilon} \eta) \cdot \bigg( \sum^\infty_{k=0} \tfrac{1}{k+1}(\sqrt{\varepsilon} \eta)^k \bigg) \cdot h \bigg( \tfrac{2T}{p(\varepsilon)} \eta \sum^\infty_{k=0} \tfrac{1}{k+1}(\sqrt{\varepsilon} \eta)^k \bigg)   \, ,
\label{eq:5.10}
\end{equation}
where $\sigma$ in $\uptau = \uptau(\varepsilon, \sigma)$ again has to be replaced by \eqref{eq:5.6}.
For our purposes, however, it is sufficient to insert $\varepsilon=0$ and note that
\begin{equation}
\begin{aligned}
\Phi(0, T, \eta) &= 2 \uptau_* h \big( \tfrac{2T}{p_*} \eta \big) \,, \\
\Phi(0, 0, \eta) &= 2 \uptau_* \, .
\end{aligned}
\label{eq:5.11}
\end{equation}
Therefore we can solve \eqref{eq:5.9} for $\eta = \eta (\sqrt{\varepsilon}, T)$, near $\varepsilon=T=0$, by the implicit function theorem.
In particular, we obtain
\begin{equation}
\eta_*(T):= \eta(0, T) = \tfrac{2T}{p_*}\cdot 2 \uptau_* + \ldots \, ,
\label{eq:5.12}
\end{equation}
to leading order in $T$.

\emph{Step 4: Expansion of $\sigma_*$}\\
Reinsertion of $\eta = \eta (\sqrt{\varepsilon}, T)$ in \eqref{eq:5.6}--\eqref{eq:5.8} provides $\sigma= \sigma(\sqrt{\varepsilon}, T)= \sigma (\sqrt{\varepsilon}, T, \eta (\sqrt{\varepsilon}, T))$.
To leading order in $T$, expansion  \eqref{eq:5.12} implies
\begin{equation}
\sigma_*(T):= \sigma(0, T, \eta_*(T)) = 1+ \big( \tfrac{2T}{p_*}\big)^2\cdot 2 \uptau_* + \ldots = 1- \tfrac{1}{3}(-1)^nb\,T^2+\ldots  
\label{eq:5.13}
\end{equation}
Here we have substituted \eqref{eq:4.4} for $\uptau_*\,$.
This proves claim \eqref{eq:5.2} and the theorem.
\end{proof}
\begin{prop} \label{Prop 5.2}
As in theorem \ref{thm:1.1}, \eqref{eq:1.19}, assume $0 \neq (-1)^{n+1}b\,T^2< \tfrac{3}{2}\pi^2$.
Then there exists a continuous function $\varepsilon_0= \varepsilon_0(T)$ such that the linear in-/stability \eqref{eq:1.20a}, \eqref{eq:1.20b} does not depend on $(\varepsilon, T)$, for $0<\varepsilon< \varepsilon_0(T)$.
Therefore in-/stability coincides with the claims of theorems \ref{thm:5.1} and \ref{thm:1.1}.
\end{prop}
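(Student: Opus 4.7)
My plan is to rule out crossings of $|\mu|=1$, equivalently of $|\sigma|=1$ for $\sigma=(-\mu)^{-n}$, throughout a region $0<\varepsilon<\varepsilon_0(T)$. Since $\Pi^m$ is compact for $mp\geq T$ by \eqref{eq:1.13}, its spectrum depends continuously on $(\varepsilon,T)$, so the count of nontrivial half-period Floquet multipliers outside the unit disk is locally constant on every connected component that avoids such a crossing. Combined with theorem \ref{thm:5.1}, which fixes this count for small $(\varepsilon,T)$, the in-/stability dichotomy \eqref{eq:1.20a}, \eqref{eq:1.20b} then propagates to every $(\varepsilon,T)$ that is path-connected to the small-$T$ regime within the admissible open region \eqref{eq:5.1}.

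The crossing set is localized via the trace. A Floquet multiplier satisfies $|\mu|=1$ if and only if $\mathrm{tr}\, W=\mu+\mu^{-1}\in[-2,2]\subset\mathbb{R}$, and expansion \eqref{eq:4.3} therefore demands $(\sigma-1)\,\uptau(\varepsilon,\sigma)\,\varepsilon\in[-2,0]\subset\mathbb{R}$. Since $|\mu|=1$ forces $|\sigma|=1$, I write $\sigma=e^{i\theta}$; using $\uptau_*\in\mathbb{R}\setminus\{0\}$ from \eqref{eq:4.4}, reality at leading order pins $\theta$ to an $O(\varepsilon)$-neighborhood of $0$ or $\pi$. The first case recovers only the trivial multiplier $\mu=-1$, already resolved by theorem \ref{thm:5.1}. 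For $\sigma\approx-1$, write $\mu=e^{i\phi}$ with $\phi\approx\pi$; the expansion $\cos\phi\approx-1+\tfrac{1}{2}(\phi-\pi)^2$ balances with $\mathrm{tr}\, W/2=-1+2\uptau_*\varepsilon+O(\varepsilon^2)$ to give $(\phi-\pi)^2\approx 4\uptau_*\varepsilon$, while the compatibility $(-\mu)^{-n}=-1$ forces $n(\phi-\pi)\equiv\pi\pmod{2\pi}$, i.e.\ $(\phi-\pi)^2=(2k+1)^2\pi^2/n^2$ for some integer $k$. Eliminating $(\phi-\pi)^2$ and substituting $n^2\varepsilon\to 4T^2/p_*^2$ from \eqref{eq:2.9}, together with \eqref{eq:4.4}, collapses the simultaneous system to
\begin{equation*}
(-1)^{n+1}b\,T^2=\tfrac{3}{2}(2k+1)^2\pi^2,
\end{equation*}
whose minimal admissible right-hand side is exactly the threshold $\tfrac{3}{2}\pi^2$ of condition \eqref{eq:5.1}. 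The strict inequality thus excludes all such resonances; note also that $(\phi-\pi)^2=4\uptau_*\varepsilon>0$ requires $\uptau_*>0$, i.e.\ $(-1)^nb<0$, so resonances only threaten the linearly stable case, explaining why condition \eqref{eq:5.1} is automatic when $(-1)^nb>0$.

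The main obstacle is to upgrade this leading-order exclusion to a uniform, quantitative estimate yielding a continuous positive $\varepsilon_0(T)$. I would exploit joint analyticity of $\uptau(\varepsilon,\sigma)$ and of $W(\varepsilon,\sigma,t,0)$ in all variables to apply the implicit function theorem along each resonance branch, producing an explicit forbidden zone of width $O(\sqrt{\varepsilon})$ around each curve $(-1)^{n+1}b\,T^2=\tfrac{3}{2}(2k+1)^2\pi^2$. On any compact subset of the admissible region these forbidden zones stay bounded away from $\varepsilon=0^+$ by a continuous positive amount, which defines $\varepsilon_0(T)$, and which is patched to the small-$T$ base case from theorem \ref{thm:5.1}. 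Continuity of the spectrum of $\Pi^m$ then transports the in-/stability classification along any path inside $\{0<\varepsilon<\varepsilon_0(T)\}$ to the small-$T$ regime, completing the argument.
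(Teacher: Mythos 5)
Your resonance analysis is, up to the reparametrization $\mu = e^{i\phi}$ versus $\eta = (\mu+1)/\sqrt{\varepsilon}$, the same computation as the paper's steps \eqref{eq:5.15}--\eqref{eq:5.21}: you localize $\sigma$ near $\pm 1$ via reality of the trace (the paper does it via $\sin(\tfrac{2T}{p_*}\omega)=0$), balance $\cos\phi$ against $\tfrac{1}{2}\mathrm{tr}\,W$, impose the compatibility $(-\mu)^{-n}=-1$, and land on $(-1)^{n+1}b\,T^2 = \tfrac{3}{2}(2k+1)^2\pi^2$ together with the sign restriction $\uptau_*>0$. That part is in full agreement with the paper, which reaches the same condition from the $\varepsilon=0$ transcendental equation \eqref{eq:5.17} in the rescaled exponent $\eta$.

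The genuine gap is in the continuation argument, which is where the real work of proposition \ref{Prop 5.2} lies. The paper performs the $T$-homotopy entirely at $\varepsilon = 0$, where \eqref{eq:5.14} is an analytic scalar equation in which $n$ has been eliminated (absorbed into the entire function $h$ and the ratio $2T/p_*$); there the algebraic multiplicity of roots with $\mathrm{Re}\,\eta_*>0$ is exactly conserved under the homotopy as long as \eqref{eq:1.19} holds. Transfer to $\varepsilon>0$ is then done by the implicit function theorem in the \emph{unstable} case (a single simple root), but in the \emph{stable} case one must exclude \emph{every} root with $|\sigma|\leq 1$ at once; the characteristic equation is of polynomial order growing with $n$, so the number of candidate roots is unbounded as $\varepsilon_n\searrow 0$. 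The paper handles this indirectly, by a compactness argument: any putative unstable sequence $\eta_n$ is uniformly bounded by \eqref{eq:5.4}, has a convergent subsequence $\eta_n\to\eta_*$, $\sigma_n\to\sigma_*$, and the limit contradicts the $\varepsilon=0$ result. Your sketch — a forbidden zone of width $O(\sqrt{\varepsilon})$ plus ``continuity of the spectrum of $\Pi^m$ in $(\varepsilon,T)$'' — does not reach this. It does not explain how all unboundedly many roots are controlled simultaneously, and the invocation of a two-parameter continuity in $(\varepsilon,T)$ is not available at the DDE level: for fixed integer $n$, equation \eqref{eq:2.9} ties $\varepsilon$ to $T$, so $(\varepsilon,T)$ traces a curve, not an open set, and the underlying phase spaces change with the delay. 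The paper circumvents both issues by working on the analytic equation \eqref{eq:5.9}, with $\sigma$ treated as a free complex parameter, and by cleanly separating the $T$-homotopy at $\varepsilon=0$ from the $\varepsilon\searrow 0$ compactness step. You would need to add both of those ingredients to turn your plan into a proof.
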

\begin{proof}
Our plan of proof is the following. 
It is sufficient to show the claim for $\varepsilon = 0$.
Extension to small $\varepsilon > 0$, by the implicit function theorem applied to \eqref{eq:5.9}, then proves in-/stability as claimed in the proposition.
To address $\varepsilon=0$ we will show below that $|\sigma_*|=1$ is impossible, under assumption \eqref{eq:1.19}, except for the simple trivial half-period Floquet multiplier $\mu= -1, \ \eta=0$.
Then, the total multiplicity of solutions $\eta$ of the characteristic equation \eqref{eq:5.10} with $|\sigma_*|\leq 1$, i.e. of
\begin{equation}
\eta- \tfrac{2T}{p_*} \Phi (0, T, \eta)=0 \, ,
\label{eq:5.14}
\end{equation}
cannot change, for increasing $T>0$, as long as \eqref{eq:1.19} is not violated.
Indeed, \eqref{eq:5.14} is analytic in all variables, and therefore the total algebraic multiplicity of strictly unstable $|\sigma_*|<1$, alias $\mathrm{Re}\ \eta_*>0$ in \eqref{eq:5.15} below, remains unchanged during that homotopy of $T$.
For $\varepsilon=0$ and for small $0\leq \varepsilon < \varepsilon_0(T)$, alike, this will extend the results of theorem \ref{thm:5.1} from small $T>0$ to all $T$ satisfying assumption \eqref{eq:1.19}, as claimed in theorem \ref{thm:1.1}.

To carry out this plan, consider the homotopy of $T>0$ in the unstable case $(-1)^nb\,T^2>0$ first. 
For $\varepsilon=0$ and small $T>0$, recall that expansion \eqref{eq:5.13} revealed the only unstable Floquet multiplier $|\sigma_*|<1$ to be real, and to be given by the unique algebraically simple root $\eta= \eta_*$ of \eqref{eq:5.9}.
In particular, that root remains simple and, for $\varepsilon = 0$, extends to the full range of $T$ by our homotopy. 
We already mentioned how the implicit function theorem extends that instability to small $\varepsilon>0$.

In case $(-1)^nb\,T^2<0$, we address stability for small $\varepsilon > 0$, indirectly.
Suppose, to the contrary, that for some admissible $b,T$ there exist subsequences $n \rightarrow \infty$ with $(-1)^nb\,T^2<0$, and corresponding solutions $\eta_n$ of \eqref{eq:5.9} at $\varepsilon_n = A^{-2}_n\searrow 0$ such that $|\sigma_n|\leq 1$ for $\sigma_n:=(-\mu_n)^{-n}$.
Since $\eta_n$ remain uniformly bounded, by \eqref{eq:5.4}, we can pass to convergent subsequences $\eta_n\rightarrow \eta_* \, ,\ \sigma_n \rightarrow \sigma_*\,$.
By continuity, $\eta_*$ solves \eqref{eq:5.9} at $\varepsilon = 0$ and $|\sigma_*|\leq 1$.
For $|\sigma_*|<1$, this contradicts our homotopy result at $\varepsilon = 0$.

After these preparations it only remains to address the stability boundary $|\sigma_*|=1$, for $\varepsilon = 0$.
In that limit, \eqref{eq:5.6}--\eqref{eq:5.8} imply
\begin{equation}
\sigma_*= \exp(z_*)=\exp \, \big( \tfrac{2T}{p_*} \eta_* \big) \, .
\label{eq:5.15}
\end{equation}
Here $\eta_*$, in view of \eqref{eq:5.4}, satisfies
\begin{equation}
\eta^2_*-2(\sigma_*-1)\uptau_*=0 
\label{eq:5.16}
\end{equation}
with $\uptau_*$ from \eqref{eq:4.4}. Substitution of \eqref{eq:5.15} for $\sigma_*$ leads to the transcendental equation
\begin{equation}
\eta^2_*-2 \big( \exp \, \big( \tfrac{2T}{p_*} \eta_* \big)-1 \big) \uptau_*=0
\label{eq:5.17}
\end{equation}
with the algebraically simple trivial solution $\eta_*=0$.

We show, indirectly, that \eqref{eq:5.17} cannot possess any other purely imaginary solutions $\eta_*=i \omega \neq 0$.
Indeed any such solution would require $\mathrm{sin} \, \big(\tfrac{2T}{p_*} \omega \big)= 0 \, ,$ i.e.
\begin{equation}
\tfrac{2T}{p_*} \omega = k \pi\,, \qquad k \in \mathbb{Z} \setminus \{0\}\,,
\label{eq:5.18}
\end{equation}
to annihilate the imaginary part in \eqref{eq:5.17}.
To annihilate the real part then requires
\begin{equation}
-\omega^2-2((-1)^k-1)\uptau_* = 0 \, .
\label{eq:5.19}
\end{equation}
For even $k$ and $\omega\neq 0$, this is impossible.
Hence $k$ must be odd.
Substitution of \eqref{eq:4.4} for $\uptau_*$ implies
\begin{equation}
0<\omega^2= 4 \uptau_*=-\tfrac{1}{6}p_*^2 (-1)^nb \, .
\label{eq:5.20}
\end{equation}
Insertion of \eqref{eq:5.20} in the square of \eqref{eq:5.18} finally requires
\begin{equation}
\tfrac{2}{3}(-1)^{n+1}b T^2= k^2\pi^2 \, ,
\label{eq:5.21}
\end{equation}
for some odd integer $k$.
But this contradicts our assumption \eqref{eq:1.19} and the proposition is proved.
\end{proof}

It is worth noting how the first Hopf instability, at $k=1$ and $(-1)^{n+1} b\,T^2= \tfrac{3}{2}\pi^2$, determines the exponent $\eta_*=i\omega$ in \eqref{eq:5.18} above. 
In fact, $\varepsilon^{-1/2}T=np/2$ in \eqref{eq:3.4}, odd $n$, and $y(t-\varepsilon^{-1/2}T)=-\sigma y(t)$, in \eqref{eq:3.5}, \eqref{eq:3.7}, then suggest a minimal period $q=np$ for the pair $(x(t),y(t))$. 
For $n>4$, this indicates a torus bifurcation at a rational rotation number, with subharmonic $1:n$ resonance.

More generally, our analysis \eqref{eq:5.15}--\eqref{eq:5.21} of nonzero purely imaginary exponents $\eta$ indicates a sequence of delay-induced torus bifurcations, which increasingly destabilize large amplitude rapidly oscillating periodic solutions $x_n$ of the delayed Duffing oscillator.
The destabilizations originate from $n= \infty$, as $(-1)^{n+1}b\,T^2$ successively increases through the values $\tfrac{3}{2}(k \pi)^2$ for odd integer $k$ and large odd $n$.
See figures \ref{fig:6.5}, \ref{fig:6.6} below for illustrations of the case $k=1,\ n=33$.

\emph{Proof of theorem \ref{thm:1.1}.}
With all tools at hand, we can now summarize the proof of our main result as follows.
In section \ref{sec2}, we have rescaled the unique periodic orbits $x_n$ of the delayed Duffing equation \eqref{eq:1.1}, \eqref{eq:2.1} with large amplitude $A_n=x_n(0)>0$ and rapid minimal period $p_n=2T/n$, to become solutions of \eqref{eq:2.3}, \eqref{eq:2.4} with amplitude 1, small parameter $\varepsilon :=A^{-2}_n$, and rescaled minimal period $p(\varepsilon)= 2TA_n/n$ of order 1.
The advantage of \eqref{eq:2.3}, \eqref{eq:2.4} was that the unwieldy limit of large amplitudes $A_n$ in \eqref{eq:1.1}, \eqref{eq:2.1}, became a regular perturbation of order $\varepsilon$.
The disadvantage was the appearance of a large time delay $T/\sqrt{\varepsilon}$.
In section \ref{sec3} we have derived an expansion for the associated half-period Wronski matrix $W$ of the linearized rescaled delayed Duffing equation \eqref{eq:3.8} of \eqref{eq:2.3}, along those periodic orbits; see proposition \ref{Prop 3.1}.
The large rescaled time delay, however, caused the appearance of a term $\sigma := (- \mu)^{-n}$ in the Floquet characteristic equation \eqref{eq:3.13}. 
Up to the very end, we treated $\sigma$ as just a complex coefficient in our analysis of instability, i.e. for $| \sigma| \leq 1$.
Section \ref{sec4} provided an expansion, in terms of $\varepsilon$ and $\sigma$, of the Wronski trace $\mathrm{tr} \, W(\varepsilon, \sigma, \tfrac{1}{2}p(\varepsilon),0)$; see proposition \ref{Prop 4.1}.

At that stage it became possible to solve the full characteristic equation \eqref{eq:5.0a}, with reinserted $\sigma= (-\mu)^{-n}$, in terms of the rescaled exponent $\eta:=(\mu+1)/\sqrt{\varepsilon}$ for the nontrivial half-period Floquet multiplier $\mu$.
In fact, the implicit function theorem provided an expansion $\eta = \eta(\sqrt{\varepsilon},T)$, although limited to small $\varepsilon, T>0$.
See \eqref{eq:5.9}, \eqref{eq:5.12}.
In theorem \ref{thm:5.1}, this proved the qualitative claims of theorem \ref{thm:1.1} by a quantitative expansion \eqref{eq:5.13}, for small $\varepsilon, T>0$.

The full qualitative claims of theorem \ref{thm:1.1}, for all $0 \neq (-1)^{n+1}b\,T^2<\tfrac{3}{2} \pi^2$ as required in assumption \eqref{eq:1.19}, were only established in proposition \ref{Prop 5.2}.
In particular it followed from the homotopy to small $T$, there, that the unstable dimensions of the original periodic orbits $x_n$ with $n$ large and $(-1)^nb>0$ are all equal to 1, given by a simple real half-period Floquet multiplier $\mu < -1$.
For $(-1)^nb<0$ satisfying assumption \eqref{eq:1.19}, in contrast, stability prevailed.
This proves the main theorem \ref{thm:1.1}. \hfill $\bowtie$

%%%%%%%%%%%%%%%%%%%%%%%%%%%%%%%%%%%%%%%%%%%%%%%%%%%%%%%

\section{Numerical examples}\label{sec6}

In this section we numerically investigate the stability and instability of the rapidly oscillating  periodic solutions $x_n(t)$ of the delayed Duffing equation \eqref{eq:1.1} with parameters $a=0,\,b=1$. 
We recall that theorem \ref{thm:1.1} predicts asymptotic stability, for ``sufficiently large'' odd $n$, and instability, for even $n$. 
For ``sufficiently small'' time delays $T>0$, more specifically, theorem \ref{thm:5.1} predicts an expansion 
\begin{equation}
\label{eq:6.1}
\tilde{\eta} = \tfrac{1}{3}(-1)^n b T + \ldots = \tfrac{1}{3}(-1)^n T + \ldots
\end{equation}
of the \emph{real Floquet exponent} $\tilde{\eta}= -\tfrac{1}{T} \log |\sigma|$, which determines stability; see \eqref{eq:5.2}.
Let us illustrate those theoretical predictions.

To determine the amplitudes $A_n$ and the periodic solutions $x_n(t)$ in \eqref{eq:1.16a} with minimal period $p_n=2T/n$, we proceed as indicated in section \ref{sec2}. 
We briefly summarize these results in the original variables, prior to rescaling \eqref{eq:2.2}.

We first recall the invariant Hamiltonian \eqref{eq:1.16b} with parameters $a=0,\,b=1$ to be
\begin{equation}
H =\tfrac{1}{2}{x'}^2+ (-1)^n\,\tfrac{1}{2} x^2 + \tfrac{1}{4} x^4\,.
\label{eq:ham_1}
\end{equation}
Solving \eqref{eq:ham_1} for $x' \equiv dx/dt$, and separating variables, determines the minimal period $p_n$ of the periodic orbit $x_n(t)$ of amplitude $A_n$ to be
\begin{equation}
\frac{T}{2n} = \frac{p_n}{4} = \int^{A_n}_{0} \frac{dx}{\sqrt{\left(2H - (-1)^n\,x^2 - x^4/2  \right)}}\,.
\label{eq:per_n_q}
\end{equation}
The invariant Hamiltonian $H$ of the periodic orbit $x_n(t)$ can be evaluated at $t=0$, where $x_n = A_n$ and $x'_n = 0$, to be
\begin{equation}
H_n = (-1)^n\,\tfrac{1}{2}A_n^2 + \,\tfrac{1}{4}A_n^4\,.
\label{eq:ham_2}
\end{equation}

Replacing $H$ in \eqref{eq:per_n_q} by \eqref{eq:ham_2} yields
\begin{equation}
\frac{T}{2 n} = \int^{A_n}_{0} \frac{dx}{\sqrt{\left(A_n^2-x^2\right)\,\left((-1)^n + A_n^2/2 + x^2/2\right)}}\,.
\label{eq:per_n_q_2}
\end{equation}
Precision values of the amplitudes $A_n$ are obtained by numerical solution of the implicit integral equation \eqref{eq:per_n_q_2}, for any specific value of $n=1,2,3,\ldots$ and any time delay $T$. 
To obtain $A_n$, the elliptic integral in \eqref{eq:per_n_q_2} is numerically evaluated by the Python-based function \texttt{quad}. 
Then \texttt{fsolve} is called to determine the amplitude $A_n$ satisfying \eqref{eq:per_n_q_2}.
The routine \texttt{fsolve} is a function wrapper around MINPACK’s \texttt{hybrd} and \texttt{hybrj} algorithms. 
These algorithms, in turn, are based on Powell's hybrid method \cite{Powell}, which combines Newton's method and the steepest descent method.
As an initial guess for $A_{n}$ in \texttt{fsolve}, for any chosen values of $n$ and $T$,  we use the approximation in \cite{DaShRa17}, Eq.\,{4}, for the exact amplitude.  
To double check, we have also solved \eqref{eq:per_n_q_2} for $A_n$ by explicit inversion of the series expansion for the elliptic integral, to degree 9, using the symbolic \texttt{mathematica} package and precision evaluation of the Gamma function value $\Gamma(\tfrac{1}{4})$.
This provided the approximations given below.

To determine the exact solutions $x_n(t)$ of \eqref{eq:1.16a} with minimal periods $p_{n}=2T/n$ and amplitude $A_n$ we recall section \ref{sec2}. 
Indeed the exact solutions $x_n(t)$ are expressed by elliptic integrals similar to \eqref{eq:per_n_q_2}. 
This leads to the Jacobi elliptic cosine function $\textrm{cn}$,
\begin{equation}
x_{n}(t) = A_{n}\,\textrm{cn}(\omega_{n}\,t, m_{n}) \,.
\label{eq:sol}
\end{equation}
Here $A_{n}\,,\,\omega_{n}$\,, and $0<m_{n}<1$ are the amplitude, angular frequency, and Jacobi elliptic modulus, respectively; see \cite{Akh}. 
The three parameters are related to each other through the equations
\begin{equation}
m_{n} = \frac{A^2_{n}}{2\,((-1)^n + A^2_{n})}~~~~~~~~~\textrm{and} ~~~~~~~~~~ \omega_{n} = \sqrt{(-1)^n + A^2_{n}}\,;
\label{eq:init_2}
\end{equation}
see also \cite{Rand}, for example. Here $A_n> \sqrt{2}$ for odd $n$, to ensure $H>0$. 
In particular the single parameter $A_{n}$\,, as determined above, fully describes the exact solution \eqref{eq:sol} with prescribed minimal period $p_n=2T/n$. 
The amplitudes $A_n$, derived from \eqref{eq:per_n_q} numerically, can therefore be substituted into \eqref{eq:init_2} to obtain the reference periodic solutions $x_n(t)$ of \eqref{eq:1.1}. 

%%%%%%%%%%%%%%%%%%%%%%%%%%%
\begin{figure}[t]
	%\centering
	\includegraphics[width=\textwidth]{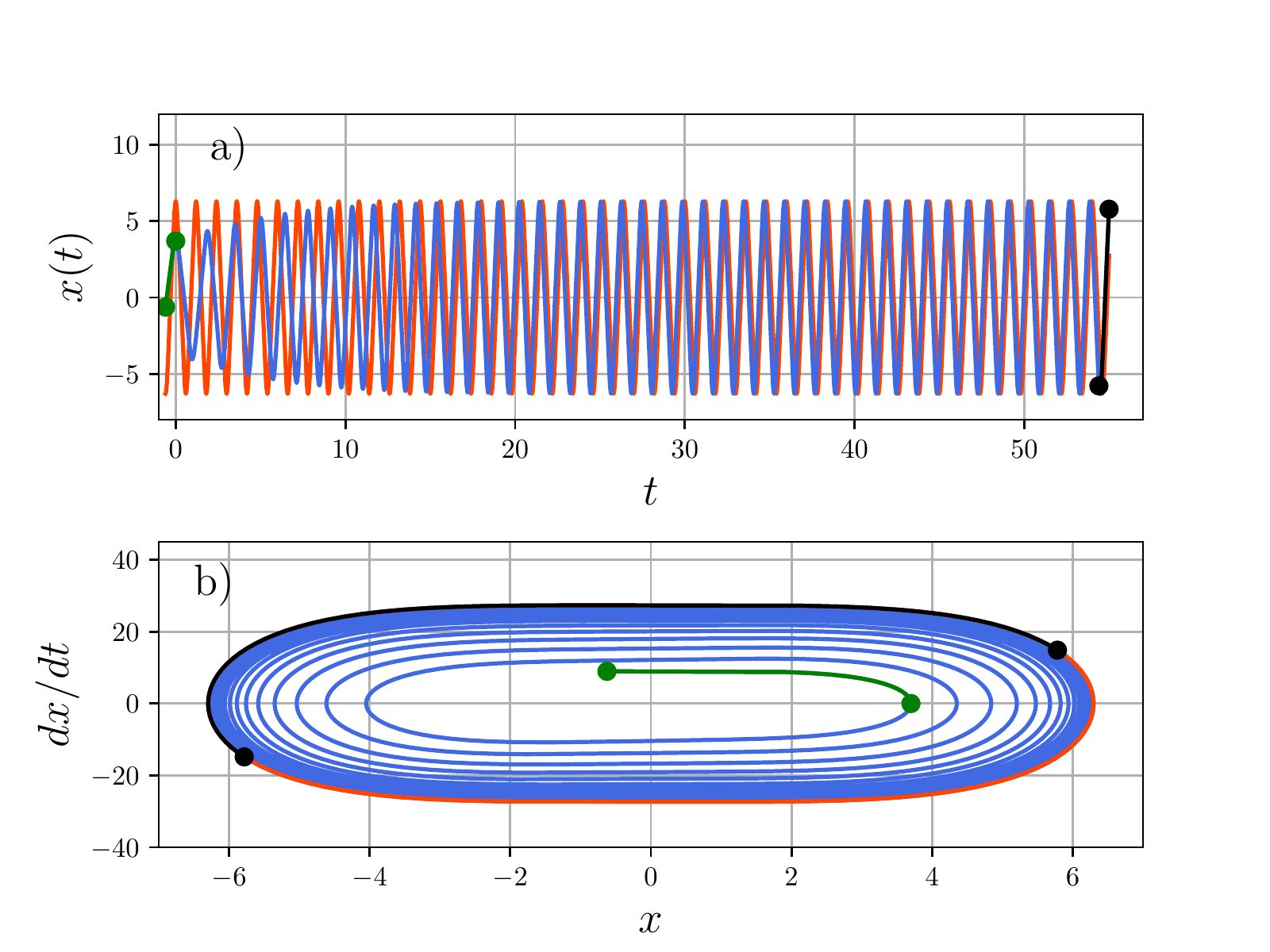} 
	\caption{
	Time histories (a) and phase plane plots (b) for $T=0.6$.
	Green: initial history function \eqref{eq:init_3} with initial amplitude $A = 3.7$. 
	Black: final state of the history function. 
	The simulated solution (blue) approaches the reference periodic solution $x_1(t)$ (red) of amplitude $A_1 = 6.29721145...$  and minimal period $p_1=1.2$ for large times $t>20$.
	}
	\label{fig:6.1}
\end{figure}
%%%%%%%%%%%%%%%%%%%%%%%%%%%%%%

Next, we numerically integrate the initial value problem for the delayed Duffing equation (1.1) using the \texttt{dde23} package. 
For the numerical integrations, we use \texttt{Pydelay} \cite{Flun11}, which is a Python library for DDEs.
The code of \texttt{dde23} is based on the Bogacki-Shampine method \cite{BoSh89} which, in turn, implements the 2(3) Runge-Kutta method.
All plots in the present work fix the maximal step size at $\Delta t = 10^{-4}$.

As initial conditions we consider the Jacobi elliptic history functions
\begin{equation}
\left(x_{0}(t), x'_{0}(t)\right) = \left( A\,\textrm{cn}(\omega t,m), -A\omega\,\textrm{sn}(\omega t,m)\,\textrm{dn}(\omega t,m)  \right),
\label{eq:init_3}
\end{equation}
of amplitude $A$, for $-T\leq t\leq 0$. 
See \eqref{eq:1.10} for the notation $x_t$\,.
Here $\omega$, $m$ are again defined via \eqref{eq:init_2}, once the initial amplitude $A$ is chosen.
In particular, initial amplitudes $A$ close to the amplitudes $A_n$ of the periodic solutions $x_n(t)$ indicate initial histories $x_0$ close to the periodic histories $(x_n)_0$ in function space $\mathcal{X} = C^0([-T,0], \mathbb{R}^2)$.

%%%%%%%%%%%%%%%%%%%%%%%%%%%
\begin{figure}[t]
	\centering
	\includegraphics[width=\textwidth]{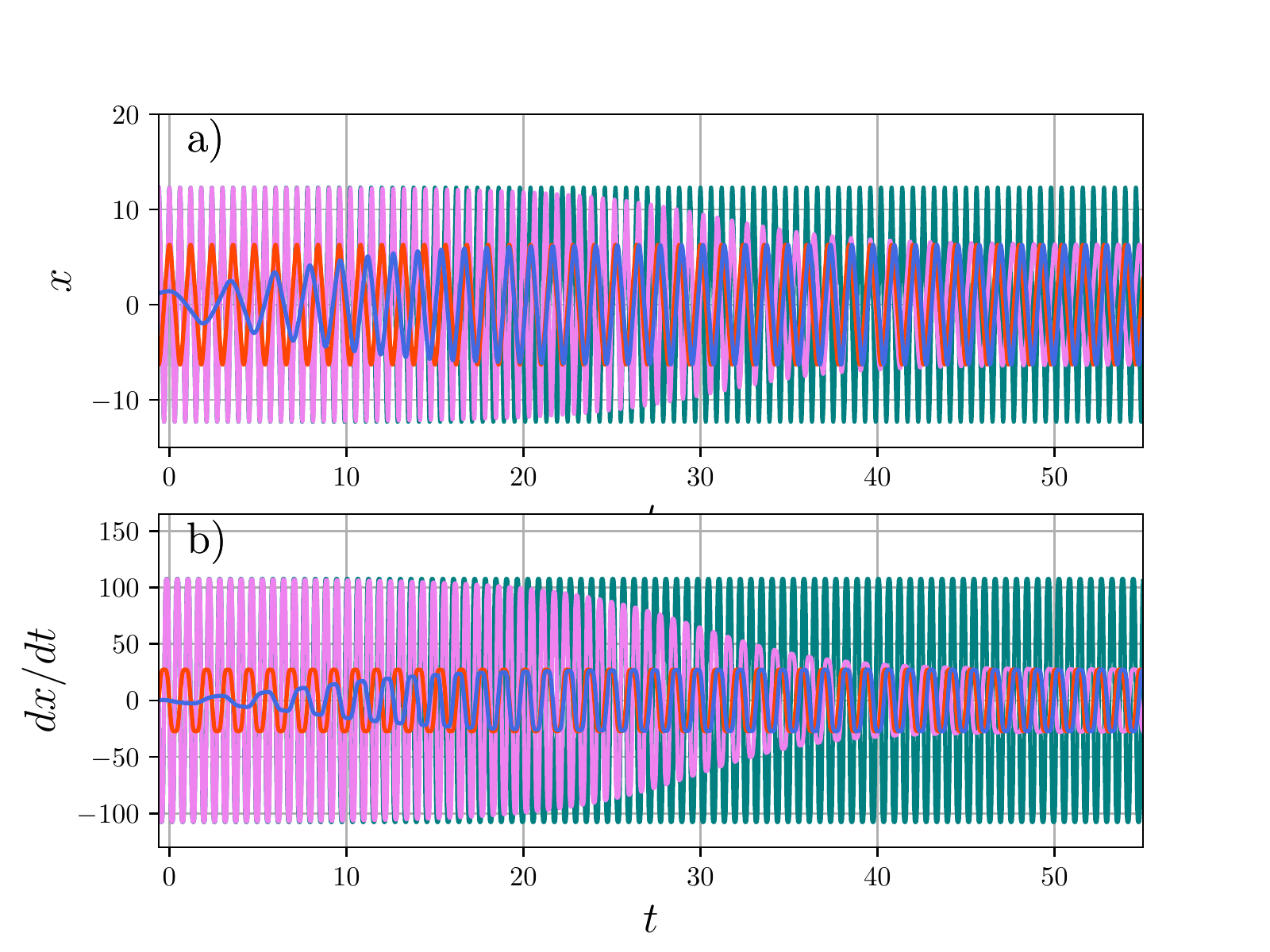} 
	\caption{
	Time histories for $T=0.6$.
	Simulations with initial history functions \eqref{eq:init_3} and initial amplitudes $A=1.42$ (blue) and $A=12.29$ (light purple).
	Time plots of $x(t)$ in (a), top, and of $\dot{x}(t)$ in (b), bottom.
	The simulated solutions indicate stability of the reference periodic solution $x_1(t)$ (red) and instability of $x_2(t)$ (teal) with amplitudes $A_1 = 6.29721145$ and $A_2 = 12.30144591494...\,$, respectively, even though $A=12.29$ is quite close to $A_2$. This suggests the light purple trajectory to be near a heteroclinic orbit from $x_2(t)$ to $x_1(t)$.}
	\label{fig:6.2}
\end{figure}
%%%%%%%%%%%%%%%%%%%%%%%%%%%%%%

For delay $T= 0.6$, figure  \ref{fig:6.1} compares a numerical solution  (blue) of the delayed Duffing equation \eqref{eq:1.1} with the reference periodic solution $x_n(t)$ (red) for $n=1$.  The figure shows time history (a) and phase plane (b).  
The green curve denotes the initial history function \eqref{eq:init_3} with initial amplitude $A = 3.7$. 
The black curve denotes the final state of the history function. 
The simulated solution (blue) approaches the reference periodic solution (red) for large times $t>20$.
Locally, but neither for small $n=1$ nor for the large initial deviations $|A-A_n|$ tested here, this is predicted by asymptotic stability of the periodic solution $x_n(t)$ for odd $n$, according to theorem \ref{thm:1.1}.

%%%%%%%%%%%%%%%%%%%%%%%%%%%%%%

\begin{figure}[t] 
\centering
	\includegraphics[width=\textwidth]{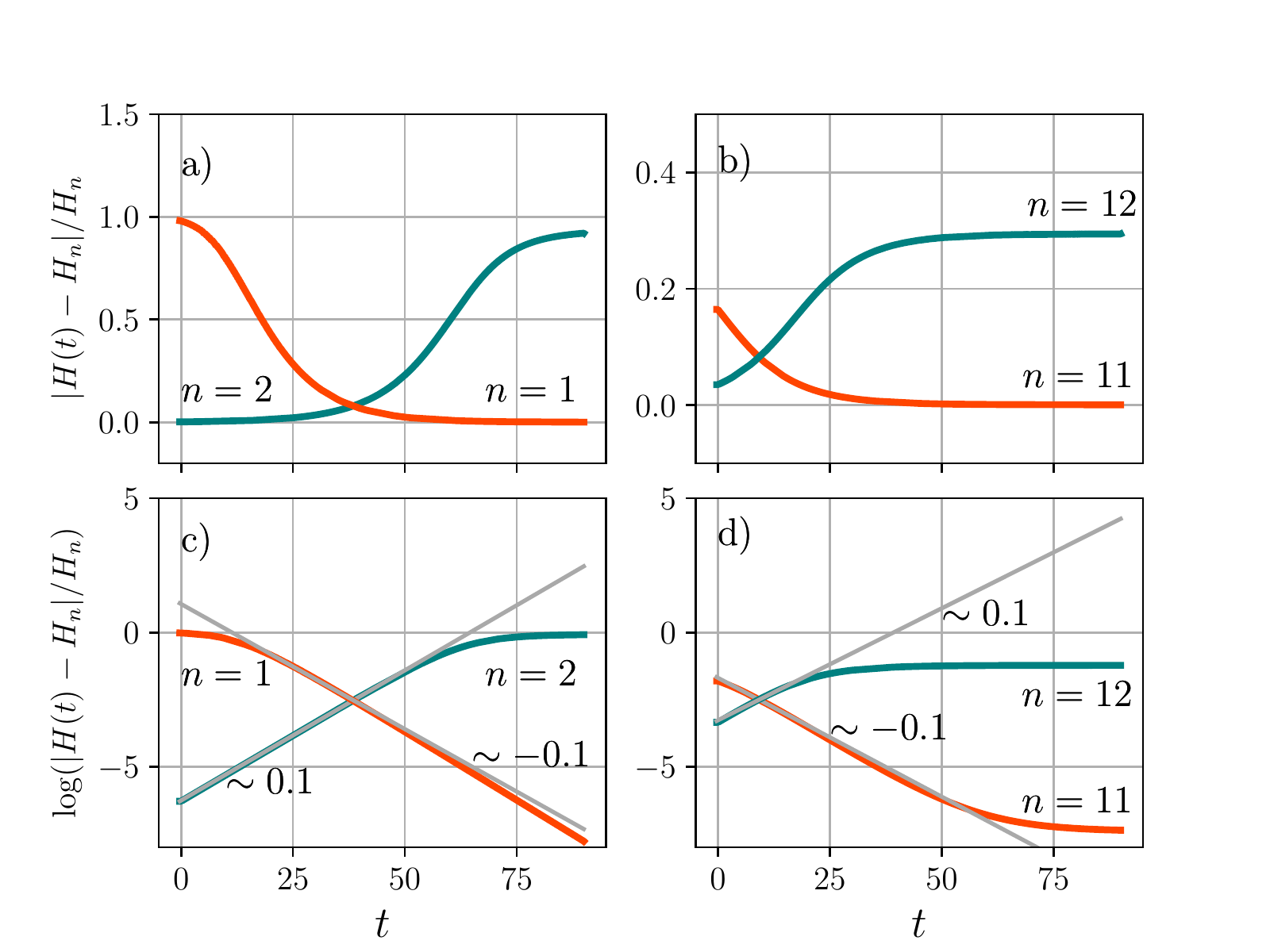}
	\caption{Evolution of the Hamiltonian $H(t)$ for $T=0.3$.
	Top row: relative deviations of $H(t)$ from the limit $H_n$, for $t\rightarrow \pm\infty$.
	Bottom row: logarithmic relative deviations.
	Orange curves: convergence to $H_n$, for $t\rightarrow +\infty$ and odd $n$.
	Teal curves: instability of $H_n$, for even $n$.
	Asymptotic slopes $\sim\pm 0.1$ (gray) confirm the Floquet exponents predicted by Theorem \ref{thm:5.1}.}
	
	\label{fig:6.3} 
\end{figure}

%%%%%%%%%%%%%%%%%%%%%%%%%%%%%%

Again for delay $T=0.6$, figure \ref{fig:6.2} shows the time histories of two numerical solutions of the delayed Duffing equation \eqref{eq:1.1}  with initial history functions \eqref{eq:init_3} and initial amplitudes $A=1.42$ (blue) and $A=12.29$ (light purple), respectively.
The  amplitudes $A_n$ of the reference periodic solutions $x_n(t)$ for $n=1$ (red) and $n=2$ (teal), respectively, are $A_1 = 6.29721145...$ and $A_2 = 12.30144591494...\,$.
Figure \ref{fig:6.2} shows how the simulated (blue, light purple) solutions for both initial amplitudes approach the same reference (red) periodic solution $x_1(t)$. 
Also note how the simulated solution with initial amplitude $A=12.29$, quite close to the periodic amplitude $A_2 = 12.30144591494...$, actually diverges from the reference (teal) periodic solution $x_2(t)$.
Again, this confirms the asymptotic stability of the periodic solution $x_n(t)$ for $n=1$, and instability for $n=2$, as predicted by theorem \ref{thm:1.1}.
The global feature of heteroclinicity from $x_2$ to $x_1$, manifested by the light purple orbit, is beyond our present scope, of course.

To test the expansion \eqref{eq:6.1} of theorem \ref{thm:5.1} for the Floquet exponent $\tilde{\eta}$ of periodic solutions $x_n(t)$, we track the Hamiltonian \eqref{eq:ham_1} numerically. 
See figures \ref{fig:6.3} and \ref{fig:6.4} for illustrations, as detailed below.
Tracking the Hamiltonian eliminates the lack of convergence in phase, which is due to the trivial Floquet exponent $\tilde{\eta}=0$.
Indeed, let $H_n>0$ denote the time-independent Hamiltonian on $x_n(t)$; see \eqref{eq:ham_2}.
Then $|H(t)-H_n(t)|$ indicates the distance of our numerical solution $x(t)$ for the delayed Duffing equation \eqref{eq:1.1} from the reference periodic \emph{orbit} $\{x_n(t)\,|\, t\in \mathbb{R}\}$, as a set, rather than the distance from any particular point $x_n(t)$ on that orbit.

We track the time-dependent Hamiltonian $H=H(t)$ as it exponentially converges to (orange curves), or diverges from (teal curves), the stationary limit $H_n$. 
According to theorem \ref{thm:5.1}, this occurs for odd $n$ (orange) and even $n$ (teal), respectively.
Let us be a little more specific.
For odd $n$ (orange), we start with initial amplitudes $A$ slightly below $A_n$ and observe convergence to $H_n$.
For even $n$ (teal), we start with initial amplitudes $A$ slightly below $A_n$ and observe convergence to $H_{n-1}$.

%%%%%%%%%%%%%%%%%%%%%%%%%%%%%%

\begin{figure}[t]
\centering 
	\includegraphics[width=\textwidth]{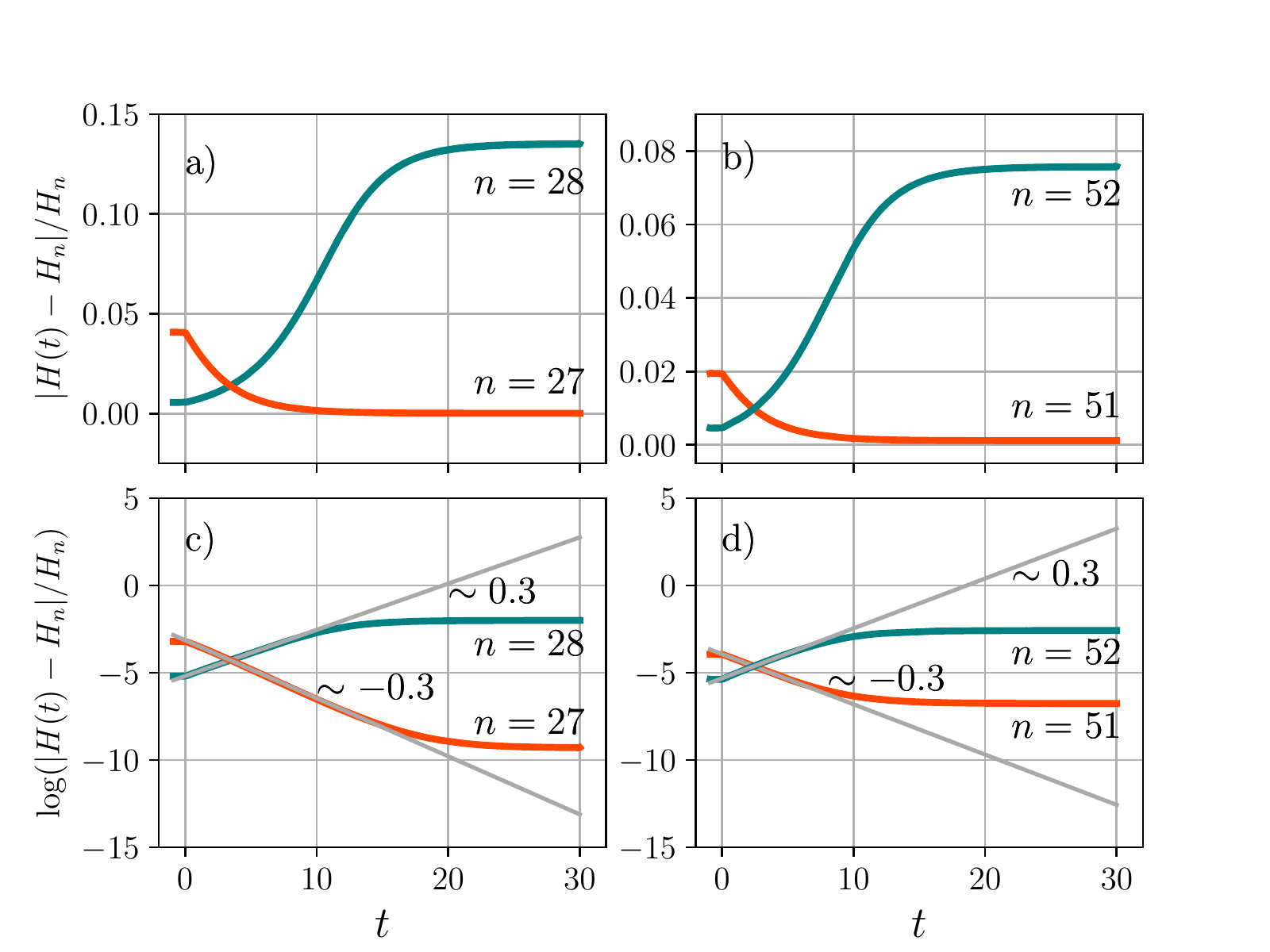}
	\caption{Evolution of the Hamiltonian $H(t)$ for $T=0.9$. Legend as in figure \ref{fig:6.3}.}
	\label{fig:6.4} 
\end{figure}

%%%%%%%%%%%%%%%%%%%%%%%%%%%%%%

The slope of $\log |H(t)-H_n|$, asymptotically with respect to time $t \rightarrow \pm \infty$, then coincides with the Floquet exponent $\tilde{\eta}$.
This determines the instability or stability of the periodic solution $x_n(t)$, depending on the positive or negative sign of the slope. 

Figures \ref{fig:6.3} and \ref{fig:6.4} show  examples of the simulated Hamiltonian $H(t)$ and of the constant reference Hamiltonian $H_{n}$\,. 
The figures confirm that periodic solutions $x_n(t)$ are stable for odd $n$ (orange curves), while even $n$ are unstable (teal curves).

In figure \ref{fig:6.3}, with delay $T=0.3$, precision amplitudes $A_n$ of the stable orange reference periodic solutions are 
$A_1=12.41931822569...$ and $A_{11}=135.97083402978303460...$; 
the amplitudes of the unstable teal periodic solutions are 
$A_2=24.69151341060282...$ and $A_{12}=148.32106281755626611...\ $.
Initial conditions are $A=4.68$ and $A=130$  for the orange curves converging to $H_1$ and $H_{11}$\,, respectively.
The teal curves diverging from $H_2$ and $H_{12}$ start from $A=24.68$ near $A_2$, and $A=147$ near $A_{12}$\,.

In figure \ref{fig:6.4}, with delay $T=0.9$, amplitudes $A_n$ of the stable orange reference periodic solutions are
$A_{27}=111.25102887868052589...$ and $A_{51}=210.13193020360773942...$;
amplitudes of the unstable teal periodic solutions are 
$A_{28}=115.35833191723956861...$ and $A_{52}=214.24522922435665376...\ $.
Initial conditions are $A=110.1$ and $A=209.1$  for the orange curves converging to $H_{27}$ and $H_{51}$\,, respectively.
The teal curves diverging from $H_{28}$ and $H_{52}$ start from $A=115.2$ near $A_{28}$\,, and $A=214$ near $A_{52}$\,.

Lower plateaus in the logarithmic plots indicate residual relative numerical errors of our numerical simulations. 
The local relative error tolerance of \texttt{dde23} is $10^{-3}$. 
All simulations support the theoretically predicted Floquet exponent $\tilde{\eta} = (-1)^n T/3$ of \eqref{eq:6.1}, which corresponds to the slopes $ \sim \pm 0.1$ for $T=0.3$, in figure \ref{fig:6.3} (c), (d), and slopes $\sim \pm 0.3$ for $T=0.9$ in figure \ref{fig:6.4} (c), (d). 
Slopes were determined by least square fits (gray lines).
Note how the slopes only depend on the even/odd parity, but not on the value, of $n$, asymptotically for large $n$.
Given that our original expansion \eqref{eq:5.2} was limited to ``sufficiently small'' $T$ and ``large enough'' $n$, we are rather surprised at such quantitative agreement far from those limits.

%%%%%%%%%%%%%%%%%%%%%%%%%%%%%%

\begin{figure}[t] 
\centering
	\includegraphics[width=\textwidth]{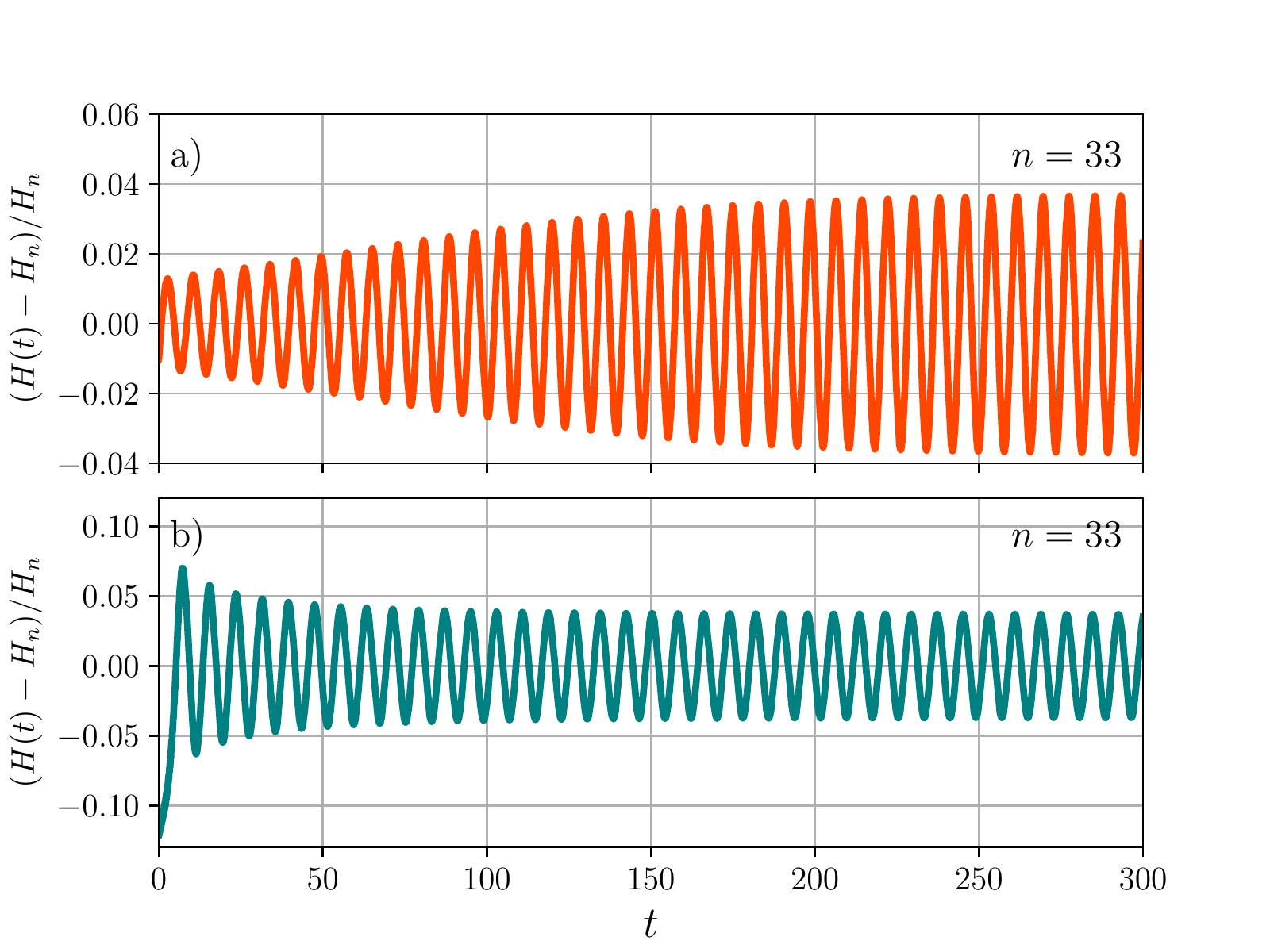}
	\caption{Transients to asymptotically periodic oscillations of the energy H(t) for delays $T=T_\textsf{crit}+0.1$ slightly above the critical threshold $T_\textsf{crit}=\sqrt{3/2}\,\pi$.
	Horizontal axes $0\leq t \leq 300$ in both plots.
	The oscillations indicate bifurcation of an invariant 2-torus $\mathbb{T}^2$ from the periodic solution $x_{33}(t)$, for delay $T$ near $T_\textsf{crit}$\,.
	Top (orange): Initial amplitude $A=31.1$ slightly above $A_{33}=31.021...$ indicates oscillatory stability loss of $x_{33}(t)$. 
	Bottom (teal): Initial amplitude $A=31.9$ slightly below $A_{34}= 31.944...$ leads to a transition to oscillatory $H(t)$, due to persisting real instability of $x_{34}(t)$.}
	\label{fig:6.5} 
\end{figure}

%%%%%%%%%%%%%%%%%%%%%%%%%%%%%%

%%%%%%%%%%%%%%%%%%%%%%%%%%%%%%

\begin{figure}[t] 
\centering
	\includegraphics[width=1.05\textwidth]{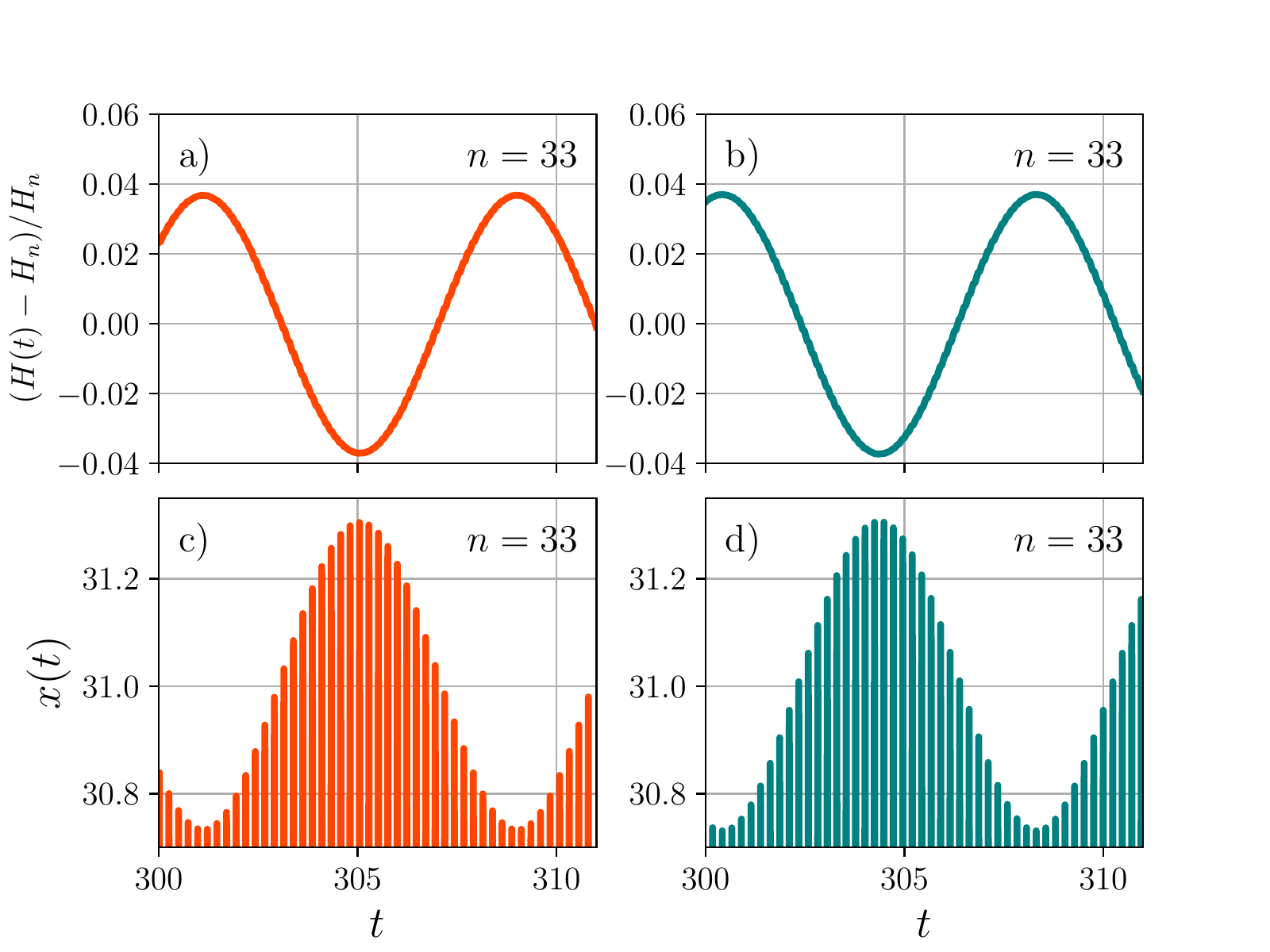}
	\caption{
	For the same initial amplitudes $A$ and color coding as in figure \ref{fig:6.5}, we focus on the same post-transient time interval $300\leq t \leq 311$ (horizontal axis), in all four plots. 
	Top row: Close-up of the sinusoidal oscillations of $H(t)$.
	Bottom row: possibly still $1:n=1:33$ resonant fluctuation of $x(t)$ in the peak region $30.7\leq x \leq 31.35$. 
	Left column (orange): Initial amplitude $A=31.1$ slightly above $A_{33}=31.021...$ indicates approach from $x_{33}(t)$. 
	Right column (teal):  Initial amplitude $A=31.9$ slightly below $A_{34}= 31.944...$ indicates approach from $x_{34}(t)$. 
	Both columns essentially agree, up to time-shift.
	The identical slowly periodic  fluctuations of $H(t)$, and the rapid oscillations of $x(t)$ with slowly fluctuating amplitude,  indicate convergence to the same invariant 2-torus $\mathbb{T}^2$.
}
	\label{fig:6.6} 
\end{figure}

%%%%%%%%%%%%%%%%%%%%%%%%%%%%%%

Our final figures are testing for the conjectural torus bifurcation at the critical boundary 
\begin{equation}
\label{eq:6.9}
T=T_\textsf{crit}=\sqrt{\tfrac{3}{2}}\,\pi = 3.8476494904855922866...	
\end{equation}
of assumption \eqref{eq:1.19} in theorem \ref{thm:1.1}, for odd $n$ and $b=1$.
We check for oscillatory $H(t)$ at $T=T_\textsf{crit}+0.1\,$.

In figure \ref{fig:6.5} we plot the relative deviations $(H(t)-H_n)/H_n$ for $n=33$.
On the top (orange), we start at an initial amplitude of $A=31.1$, slightly above the amplitude $A_{33}=31.021414799836585...$ of the reference rapid periodic solution $x_{33}(t)$.
We clearly observe a loss of stability of the solution $x_{33}(t)$, which we asserted to be stable for $T<T_\textsf{crit}$ and large enough $n$.
In fact $H(t)-H_{33}$ increases to an asymptotically periodic, sinusoidal oscillation. 
This suggests a supercritical Neimark-Sacker bifurcation  \cite{iooss,neisa} of $x_{33}(t)$ to a stable 2-torus $\mathbb{T}^2$, near $T=T_\textsf{crit}$\,. Our remark following the proof of proposition \ref{Prop 5.2} predicts that the bifurcation occurs near $1:n=1:33$ resonance.
This may result in an asymptotic oscillation of $x(t)$ itself which is still $1:33$ subharmonic, or possibly quasiperiodic.

On the bottom (teal), we start at an initial amplitude $A=31.9$ slightly below the reference amplitude $A_{34}=31.91443613945749...\,$. 
Real instability of $x_{34}(t)$ persists to dominate, and we observe an asymptotic decay to the same sinusoidal periodic oscillation of $H(t)-H_{33}$\,, as in the previous case. 
This further attests to the presence of a stable invariant 2-torus $\mathbb{T}^2$, which causes the asymptotically $1:33$ resonant subharmonic, or possibly quasiperiodic, oscillation of $x(t)$, further examined in figure \ref{fig:6.6}.

For post-transient times $300 \leq t \leq 311$, we examine a close-up on the sinusoidal oscillations of $-0.04\leq (H(t)-H_{33}))/H_{33} \leq +0.04$ and the subharmonic, or possibly quasiperiodic, fluctuation of $x(t)$ in the peak region $30.7\leq x \leq 31.35$. 
See the top and bottom rows (a), (b) and (c), (d) of figure \ref{fig:6.6}, respectively. 
The orange graphs, in the left column, and the teal graphs, in the right column, refer to the same initial conditions as in figure \ref{fig:6.5}.
The top row clearly indicates convergence of both solutions to the same invariant 2-torus $\mathbb{T}^2$, with identical sinusoidal oscillations of $H(t)-H_{33}$ up to a phase shift. 
In particular the asymptotic periods $\sim 2T$ coincide, right and left.
The sinusoidal character of $H(t)-H_{33}$ indicates that our delay parameter $T$ is close to the actual bifurcation point, where the invariant 2-torus $\mathbb{T}^2$ bifurcates from the destabilizing rapidly periodic reference solution $x_{33}(t)$.

The bottom row of figure \ref{fig:6.6} shows a slow sinusoidal fluctuation, over slow periods $\sim 2T$, of the amplitudes of the rapid oscillations of $x(t)$ of minimal ``periods'' near $2T/n= 0.23925...\ $. 
This indicates the $1:n=1:33$ subharmonic, or possibly quasiperiodic, flow on the invariant 2-torus $\mathbb{T}^2$, and agrees well with our remark following the proof of proposition \ref{Prop 5.2}.

%%%%%%%%%%%%%%%%%%%%%%%%%%%%%%%%%%%%%%%%%%%%%%%%%%%%%%%
\bigskip


\begin{thebibliography}{999999999}

{\small{

\bibitem[Akh90]{Akh}
N.I.~Akhiezer.
\emph{Elements of the Theory of Elliptic Functions.}
AMS Transl.~Math.~Monographs \textbf{79}, Providence R.I., 1990.

%\bibitem[BeCo63]{beco63}
%R.~Bellman and K.L.~Cooke.
%\emph{Differential-Difference Equations}.
%Academic Press, New York, 1963.

\bibitem[BoSh89]{BoSh89} 
P.~Bogacki and L. F.~Shampine. 
A 3(2) pair of Runge-Kutta formulas. 
Appl.~Math.~Letters \textbf{2} (1989), 321--325.

\bibitem[DaShRa17]{DaShRa17}
M.~Davidow, B.~Shayak, R.~Rand.
Analysis of a remarkable singularity in a nonlinear DDE. Nonlin.~Dyn.~\textbf{90} (2017), 317--323. 

\bibitem[Die{\&}al95]{dieetal95}
O.~Diekmann, S.A.~van~Gils, S.M.~Verduyn-Lunel and \mbox{H.-O.~Walther}.   
\emph{Delay Equations: Functional-, Complex-, and Nonlinear Analysis}.
App.~Math.~Sci. \textbf{110}, Springer-Verlag, New York, 1995.

\bibitem[Duff1918]{duff1918}
G.~Duffing.
\emph{Erzwungene Schwingungen bei veränderlicher Eigenfrequenz und ihre technische Bedeutung}. 
Sammlung Vieweg Heft 41/42, Braunschweig, 1918.

\bibitem[FieMP89]{fiemp89}
B.~Fiedler and J.~Mallet-Paret. 
Connections between Morse sets for delay-differential equations. J.~Reine Angew.~Math.~\textbf{397} (1989), 23--41.

\bibitem[Fie{\&}al07]{fieetal07}
B.~Fiedler, V.~Flunkert, M.~Georgi, P.~Hövel, and E.~Schöll.
Refuting the odd number limitation of time-delayed feedback control. 
Phys.~Rev.~Lett.~\textbf{98} (2007), 114101.

\bibitem[Fie{\&}al08]{fieetal08}
B.~Fiedler, V.~Flunkert, M.~Georgi, P.~Hövel, and E.~Schöll. Beyond the odd-number limitation of time-delayed feedback control. 
In \emph{Handbook of Chaos Control}. (E.~Schöll et al., eds.), Wiley-VCH, Weinheim 2008, 73--84. 

\bibitem[Fie{\&}al10]{Fieetal10}
B.~Fiedler, V.~Flunkert, P.~Hövel, and E.~Schöll.
Delay stabilization of periodic orbits in coupled oscillator systems.
Phil.~Trans.~Roy.~Soc.~A.~\textbf{368} (2010), 319--341.

\bibitem[Flun11]{Flun11} 
V.~Flunkert. 
Pydelay: A Simulation Package. In: \emph{Delay-Coupled Complex Systems. Springer Theses.} Springer, Berlin 2011.

\bibitem[GuHo83]{guho83}
J.~Guckenheimer and P.~Holmes.
\emph{Nonlinear Oscillations, Dynamical Systems, and Bifurcations of Vector Fields}.
Springer-Verlag, New York, 1983.

\bibitem[Hale77]{hale77}
J.K.~Hale.
\emph{Theory of Functional Differential Equations}.
Springer-Verlag, New York, 1977.

\bibitem[HaleVL93]{halevl93}
J.K.~Hale and S.M.~Verduyn-Lunel.
\emph{Introduction to Functional Differential Equations}.
Springer-Verlag, New York, 1993.

\bibitem[HaBe12]{HaBe12}
M.~Hamdi and M.~Belhaq.
Control of bistability in a delayed Duffing oscillator.
Adv.~Acoustics Vibr.~(2012), 5p, doi:10.1155/2012/972498

\bibitem[HiSh18]{hish18}
D.~Hill and D.~Shafer.
Asymptotics and stability of the delayed Duffing equation.
J.~Differ.~Eqs.~\textbf{265} (2018), 33--78.

\bibitem[IvLo99]{IvLo99}
A.~Ivanov and J.~Losson.
Stable rapidly oscillating solutions in delay differential equations with negative feedback. Diff.~Int.~Eqs.~\textbf{12} (1999), 811--832. 

\bibitem[Ioo79]{iooss}
G.~Iooss.
\emph{Bifurcation of Maps and Applications.}
North-Holland, Amsterdam, 1979.

\bibitem[Kan08]{scholarduff}
T.~Kanamaru.
Duffing oscillator.
doi:10.4249/scholarpedia.6327 (2008).

\bibitem[KiLe17]{KiLe17}
G.~Kiss and J.~Lessard.
Rapidly and slowly oscillating periodic solutions of a delayed van der Pol oscillator.
J.~Dyn.~Diff.~Eqs.~\textbf{29} (2017), 1233–1257. 

\bibitem[KoBr11]{KoBr11}
I.~Kovacic and M.J.~Brennan (eds.).
\emph{The Duffing Equation: Nonlinear Oscillators and their Behaviour.}
John Wiley \& Sons, Chichester, 2011.

\bibitem[KuSa08]{neisa}
Y.A. Kuznetsov and R.J. Sacker.
Neimark-Sacker bifurcation.
doi:10.4249/ scholarpedia.1845 (2008).

\bibitem[MChB15]{MiChaBa15}
R.~Mitra, S.~Chatterjee and A.~Banik.
Limit cycle oscillation and multiple entrainment phenomena in a Duffing oscillator under time-delayed displacement feedback.
J.~of Vibr.~Control  \textbf{23} (2015), 2742–2756.

\bibitem[MP88]{mp88}
J.~Mallet-Paret.
Morse decompositions for differential delay equations.
J.~Differ.~Eqs. \textbf{72} (1988), 270--315.

\bibitem[MPNu13]{MPNu13} 
J.~Mallet-Paret and R.~Nussbaum. 
Tensor products, positive linear operators, and delay-differential equations.
J.~Dyn.~Diff.~Eqs.~\textbf{25} (2013), 843–905. 

\bibitem[MPSe96a]{mpse96a}
J.~Mallet-Paret and G.~Sell.
Systems of differential delay equations: Floquet multipliers
and discrete Lyapunov functions.
J.~Differ.~Eqs. \textbf{125} (1996), 385--440.

\bibitem[NaUe98]{nakue98}
H.~Nakajima and Y.~Ueda.
Half-period delayed feedback control for dynamical systems with symmetries.
Phys.~Rev.~E. \textbf{58} (1998), 1757--1763.

\bibitem[Nuss74]{Nuss74}
R.~Nussbaum.
Periodic solutions of some nonlinear autonomous functional differential equations.
Ann.~Mat.~Pura Appl. \textbf{101} (1974), 263--306.

\bibitem[Pow70]{Powell}  
M.J.D.~Powell.
A hybrid method for nonlinear equations.
In \emph{Numerical Methods for Nonlinear Equations}, P.~Rabinowitz (ed.). 
Gordon and Breach, London 1970, 87–114.

\bibitem[Pyr92]{Pyr1}
K.~Pyragas.
Continuous control of chaos by self-controlling feedback. 
Phys.~Lett.~A. \textbf{170} (1992), 421--428.

\bibitem[Pyr12]{Pyr2}
K.~Pyragas.
A twenty-year review of time-delay feedback control and recent developments.
Int.~Symp.~Nonl.~Th.~Appl., Palma de Mallorca (2012), 22--26.

\bibitem[Rand94]{Rand} 
R.H.~Rand.
\emph{Topics in Nonlinear Dynamics with Computer Algebra, Computation in Education: Mathematics, Science and Engineering.} 
Vol. 1, Gordon and Breach, Langhorne, PA, 1994.

\bibitem[Sah{\&}al19]{Sahetal19}
S.M.~Sah, B.~Fiedler, B.~Shayak, R.H.~Rand.
Unbounded sequences of stable limit cycles in the delayed Duffing equation: An exact analysis.
arXiv:1908.06533, submitted 2019.

\bibitem[Schn13]{Schn13}
I.~Schneider. 
Delayed feedback control of three diffusively coupled Stuart-Landau oscillators: a case study in equivariant Hopf bifurcation. 
Phil.~Trans. Roy.~Soc.~London, Ser.~\textbf{A} Math.~Phys.~Eng.~Sci. \textbf{371} (2013), 20120472.

\bibitem[SchnBo16]{SchnBo16}
I.~Schneider and M.~Bosewitz.
Eliminating restrictions of time-delayed feedback control using equivariance.
Discr.~Cont.~Dyn.~Syst.  \textbf{36} (2016), 451–467. 

\bibitem[Sto08]{Sto08}
D.~Stoffer.
Delay equations with rapidly oscillating stable periodic solutions.
J.~Dyn.~Diff.~Eqs.~\textbf{20} (2008), 201--238.

\bibitem[Sto11]{Sto11}
D.~Stoffer.
Two results on stable rapidly oscillating periodic solutions of delay differential equations.
Dyn.~Syst. \textbf{26} (2011), 169–188. 

\bibitem[Vas11]{vas11}
G.~Vas.
Infinite number of stable periodic solutions for an equation with negative feedback.
Electr.~J.~Qualit.~Th.~Diff.~Eq. \textbf{18} (2011), 1--20.

\bibitem[WaCha04]{wacha04}
P.~Wahi and A.~Chatterjee.
Averaging Oscillations with Small Fractional Damping
and Delayed Terms.
Nonlin.~Dyn. \textbf{38} (2004), 3--22.

\bibitem[Wal83]{wal83}
H.-O.~Walther.
Bifurcation from periodic solutions in functional differential equations.
Math.~Z. \textbf{182} (1983), 269--289.

\bibitem[Wal14]{wal14}
H.-O.~Walther. 
Topics in delay differential equations.
Jahresber.~DMV \textbf{116} (2014), 87--114.

\bibitem[XuChu03]{XuChu03}
J.~Xu and K.W.~Chung.
Effects of time delayed position feedback on a van der Pol - Duffing oscillator.
Physica D \textbf{180} (2003), 17--39.




}}
\end{thebibliography}
\end{document}